\newtheorem{lema}{Lemma}
\newtheorem{teor}{\sc {\textbf {Theorem}}}
\newtheorem{defin}{\sc {\textbf {Definition}}}
\newtheorem{coro}{\sc {\textbf {Corollary}}}
\newcommand{\Dr}{\mathbb{R}}
\newcommand{\F}{\mathcal{F}}
\newcommand{\Df}{\mathsf{F}}
\newcommand{\Dn}{\mathbb{N}}
\title[Sectional Connecting Lemma]{Sectional Connecting Lemma}
\author[S. Bautista, V. Sales, Y. Sánchez. ]{ S. Bautista, V. Sales, Y. Sánchez.}
\address{S. Bautista \\
Departamento de Matem\'{a}ticas \\
Universidad Nacional de Colombia, Bogot\'{a}, Colombia}
\email{sbautistad@unal.edu.co}
\address{V. Sales \\
Departamento de Matem\'{a}tica \\
Universidade Federal do Maranhão, São Luis, Brasil}
\email{vsales1@gmail.com}
\address{Y. S\'{a}nchez \\
Departamento de Matem\'{a}ticas \\
Universidad Nacional de Colombia, Bogot\'{a}, Colombia}
\email{yasanchezr@unal.edu.co}
\date{\today}
\keywords{Anosov Flow, Sectional-Anosov Flow, Sensitive.}
\begin{document}

\sloppy 

\begin{abstract}
A hyperbolic set on a compact manifold $M$, satisfies the property: given two of your any points $p$ and $q$, such that for all positive $\epsilon>0$, there is a trajectory in the hyperbolic set from a point $\epsilon$-close to $p$ to a point
$\epsilon$-close to $q$, then there is a point in $M$ whose $\alpha$-limit is that of $p$ and whose $\omega$-limit is that of $q$. Bautista and Morales in \cite{bm1}, give a version of this property, for sectional-Anosov flows (vector fields whose maximal invariant set is sectional-hyperbolic), including some conditions; among them that limit the dimension of $M$ to  three. In this paper, we prove a generalization of this result, for sectional-hyperbolic sets of codimension one in high dimensions.\\
\end{abstract}

\maketitle

\section{Introduction}

The sectional-hyperbolic sets are a more general class than hyperbolic sets, since it includes these and other non-hyperbolic sets as geometric Lorenz attractor, then it's relevant study which  properties valid for hyperbolic sets are also satisfied by the sectional-hyperbolic sets.  A property of the Anosov flows (when the whole  manifold is a hyperbolic set), is the Anosov connecting lemma (Theorem \ref{teor1}), this result was extended by Bautista and Morales in \cite{bm1}, for sectional-Anosov flows (when the maximal invariant is a sectional-hyperbolic set), in dimension three, including some necessary conditions;  this result is known as Sectional- Anosov Connecting Lemma (Theorem \ref{teor3}).\\

Although the Anosov Connecting Lemma, it is very useful in hyperbolic dynamics, presents the limitation of requiring that the flow should be Anosov, however, thanks to the theory of invariant manifolds (see \cite{hps}), the same property can be obtained for arbitrary hyperbolic sets (Theorem \ref{teor2}). In this paper, our main objective is extend the sectional-Anosov connecting Lemma to  high dimensions without the limitation of that the flow should be sectional-Anosov, allowing to use it directly in sectional-hyperbolic sets that contain the unstable manifolds of their hyperbolic subsets. For this, we including some conditions, and also generalize the characterization of  omega-limit sectional-hyperbolic sets which are closed orbits, given by Bautista and Morales  \cite{bm2}, from dimension three to high dimensions. Below we will give the necessary definitions to specify our objective.\\

Hereafter $M$ will be a compact manifold possibly with nonempty boundary endowed  with a Riemannian metric $\langle\cdot,\cdot\rangle$  induced by norm $||\cdot||$. Given $X$ an $C^1$ vector field, inwardly transverse to the boundary (if nonempty) we call $X_t$ its induced {\em flow} on $M$. Define the {\em maximal invariant set} of $X$ by
$$
M(X)=\displaystyle\bigcap_{t\geq0}X_t(M).
$$
The \textit{orbit} of a point $p \in M(X)$ is defined by $\mathcal{O}(p)=\{X_t(p)\,|\,t\in\mathbb{R}\}$.
A {\em singularity}  is a point $q$  where $X$ is zero, i.e. $X(q)=0$ (or equivalently $\mathcal{O}(q)=\{q\}$)and a {\em periodic orbit} is an orbit $\mathcal{O}(p)$ such that $X_T(p)=p$ for some minimal $T>0$ and $\mathcal{O}(p)\neq \{p\}$. By a {\em closed orbit}
is a singularity or a periodic orbit.\\

Given $p\in M$ we define the {\em omega-limit set},
$\omega_X(p)=\{x\in M\,|\,x=\lim_{n\rightarrow\infty}X_{t_n}(p)$ for some sequence $t_n\rightarrow\infty\}$, if $p \in M(X)$, define the {\em alpha-limit set}
$\alpha_X(p)=\{x\in M:x=\lim_{n\to{\infty}} X_{-t_n}(p),
$ for some sequence $ t_n\to \infty\}$.\\

A compact subset $\Lambda$ of $M$ is called {\em invariant} if
$X_t(\Lambda)=\Lambda$ for all $t\in\mathbb{R}$; {\em transitive} if $\Lambda = \omega_X(p)$ for some $p\in \Lambda$. A compact invariant set $\Lambda$ is {\em attracting} if there is a neighborhood $U$ such that
\[\Lambda=\cap_{t \geq 0}X_t(U),\]
and an {\em attractor} of $X$,  is an  attracting set $\Lambda$  which is {\em transitive}. On the other hand, a compact invariant set $\Lambda$ is {\em Lyapunov stable}, if for every neighborhood $U$ of $\Lambda$, exists a neighborhood $W$ such that: $X_t(p)\in U$ for all $p\in W$ and $t\geq 0$.

\begin{defin}\label{defin1}
A compact invariant set $\Lambda \subseteq M(X)$ is {\em hyperbolic} if there are positive constants $K,\lambda$ and a continuous $DX_t$-invariant splitting of tangent bundle  $T_{\Lambda}M=E^s_{\Lambda}\oplus E^X_{\Lambda}\oplus E^u_{\Lambda}$,
 such that for every $x \in \Lambda$ and $t \geq 0$:
\begin{enumerate}
\item [$(1)$] $\| DX_t(x)v^s_x \| \leq K e^{-\lambda t}\| v^s_x \|,\ \ \forall v^s_x \in E^s_x$;
\item [$(2)$] $ \| DX_t(x)v^u_x \| \geq K^{-1} e^{\lambda t} \| v^u_x \|,\ \ \forall v^u_x \in E^u_x$;
\item [$(3)$] $E^{X}_{x}=\left\langle X(x)\right\rangle $.
\end{enumerate}
\end{defin}

If $E^s_x\neq 0$ and $E^u_x\neq 0$ for all $x\in \Lambda$ we will say that $\Lambda$
is a {\em saddle-type hyperbolic set}. A closed orbit is hyperbolic if it does as a compact invariant set of $X$.

When $\Lambda =M$, we say that the flow generated by $X$ is an Anosov flow.\\

The invariant manifold theory \cite{hps} asserts that if $H\subseteq M$ is hyperbolic set of $X$ and $p \in H$, then the topologic sets:
$$W^{ss}(p)=\{q \in M: \lim_{t\to\infty} d(X_t(q),X_t(p))=0\}$$
and
$$W^{uu}(p)=\{q \in M: \lim_{t\to- \infty} d(X_t(q),X_t(p))=0\}$$
they are $C^1$ manifolds in $M$, so-called strong stable and unstable manifolds, tangent at $p$ to the subbundles
$E^s_p$ and $E^u_p$ respectively. Saturating them with the flow we obtain the stable and unstable
manifolds $W^{s}(p)$ and $W^{u}(p)$ respectively, which are invariant. If $p,p' \in H$, we have to
$W^{ss}(p)$ and $W^{ss}(p')$ are same or disjoint (similarly for $W^{uu}$).\\

\begin{defin}\label{defin2}
A compact invariant set $\Lambda \subseteq M(X)$ is {\em sectional-hyperbolic} if every singularity in $\Lambda$ is hyperbolic (as invariant set) and there are a continuous $DX_t$-invariant splitting of tangent bundle $T_{\Lambda} M = \Df^s_{\Lambda}\oplus \Df^c_{\Lambda}$, and positive constants $K,\lambda$
such that for every $x \in \Lambda$ and $t \geq 0$:

\begin{enumerate}
\item [$(1)$]
$\| DX_t(x)v^s_x \| \leq K e^{-\lambda t}\| v^s_x \| ,\ \ \forall v^s_x \in \mathsf{F}^s_x$;
\item [$(2)$]
$\| DX_t(x)v^s_x \| \cdot \| v^c_x \|  \leq K e^{-\lambda t}
\| DX_t(x)v^c_x \| \cdot \| v^s_x \|,\ \ \forall v^s_x \in \mathsf{F}^s_x,\ \ \forall v^c_x \in \mathsf{F}^c_x$;
\item [$(3)$]
$ \| DX_t(x)u^c_x , DX_t(x)v^c_x  \|_{X_t(x)} \geq K^{-1} e^{\lambda t}  \| u^c_x , v^c_x \|_x, \ \
\forall u^c_x , v^c_x \in \mathsf{F}^c_x$.
Where $||\cdot, \cdot ||_x$  is induced $2$-norm by the Riemannian metrics
$\langle \cdot, \cdot \rangle_x$ of $T_x\Lambda$, given by
$$||v_x,u_x||_x=\sqrt{\langle v_x,v_x \rangle_x \cdot \langle u_x,u_x\rangle_x - \langle v_x,u_x
\rangle_x^2}$$
for all $x \in \Lambda$ and every $u_x,v_x\in T_x\Lambda$
\end{enumerate}
\end{defin}

The third condition guarantees, the increase exponential of the area of parallelograms in the central subbundle $\mathsf{F}^c$. Since $ X(x) \in \Df^c_x$ for all $x \in \Lambda$ (see lemma 4 in \cite{bm}), have that  dimension of the central subbundle must be greater than or equal to $2$. In the particular case where  $dim(\mathsf{F}^c_x)=2$ we will say that $\Lambda$ is a sectional-hyperbolic set of \textit{codimension $1$}. \\

When $\Lambda=M(X)$, we say that the flow generated by $X$ is an sectional-Anosov flow.\\

Also the invariant manifold theory \cite{hps} asserts that through any point $x$ of a sectional-hyperbolic set $\Lambda$ has the
strong stable manifolds $\F^{ss}(x)$, tangent at $x$ to the subbundle $\Df^s_x$, which induces an foliation over $\Lambda$; saturating them with the flow we obtain the invariant manifold $\F^s(x)$.\\

Unlike hyperbolic sets, the sectional-hyperbolic sets can have regular orbits accumulating singularities. We have to:

\begin{lema}\label{lema1}
If $\Lambda \subseteq M(X)$ is sectional-hyperbolic set, and $\sigma$ is an singularity in $\Lambda$ then:
$$\F^{ss}(\sigma) \cap \Lambda = \{\sigma\}$$
\end{lema}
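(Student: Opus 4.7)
The plan is to argue by contradiction: assume there exists $x \in \F^{ss}(\sigma) \cap \Lambda$ with $x \neq \sigma$, and derive a contradiction by showing that the velocity $X(x)$ must lie in both $\Df^s_x$ and $\Df^c_x$, which is impossible because the splitting is a direct sum and $x$ will turn out to be regular.

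First I would rule out the possibility that $x$ is itself a singularity. If $X(x)=0$ then $X_t(x)=x$ for all $t$, so membership in $\F^{ss}(\sigma)$ (which forces $d(X_t(x),\sigma)\to 0$) immediately gives $x=\sigma$, contradicting the choice of $x$. Hence $x$ is a regular point of $\Lambda$, in particular $X(x)\neq 0$.

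Next I would exploit the $X_t$-invariance of the strong stable foliation, a standard consequence of the invariant manifold theory of \cite{hps}: $X_t(\F^{ss}(p))=\F^{ss}(X_t(p))$. Applied to $p=\sigma$, together with $X_t(\sigma)=\sigma$, this yields $X_t(\F^{ss}(\sigma))=\F^{ss}(\sigma)$ for every $t$, so the entire orbit $\Or(x)$ lies inside the leaf $\F^{ss}(\sigma)$. Since $x \in \F^{ss}(\sigma) \cap \F^{ss}(x)$ and two leaves of a foliation sharing a point must coincide, $\F^{ss}(\sigma)=\F^{ss}(x)$, and this leaf is tangent at $x$ to $\Df^s_x$. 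The orbit being a smooth curve contained in $\F^{ss}(\sigma)$, its velocity at $x$ satisfies $X(x)\in T_x\F^{ss}(\sigma)=\Df^s_x$.

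To close the argument I would invoke the fact, recorded right after Definition \ref{defin2} (Lemma 4 of \cite{bm}), that $X(x)\in \Df^c_x$. Combined with the direct-sum decomposition $T_xM=\Df^s_x\oplus \Df^c_x$, this forces $X(x)=0$, contradicting the regularity of $x$ established in step one. The most delicate point, and the one I would take the most care with, is the passage from orbit-invariance of the leaf to the tangency $X(x)\in \Df^s_x$: it requires treating $\F^{ss}$ as a genuine $C^1$ foliation on $\Lambda$ whose tangent distribution at every $x\in\Lambda$ is exactly $\Df^s_x$, rather than using only the tangency statement at the distinguished point $\sigma$.
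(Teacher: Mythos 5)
The paper itself does not prove this lemma; it simply cites Corollary~2 of \cite{bm}, so there is no internal argument to compare against. Your reconstruction is nonetheless sound, and it is in fact the standard proof of this fact. The chain of ideas you use is exactly what one expects: first exclude $x\in\mathrm{Sing}(X)$ via the fact that $\F^{ss}(\sigma)\subseteq W^{ss}(\sigma)$ forces forward convergence to $\sigma$; next use the $X_t$-equivariance $X_t\bigl(\F^{ss}(p)\bigr)=\F^{ss}\bigl(X_t(p)\bigr)$, which for the fixed point $\sigma$ yields invariance of the single leaf $\F^{ss}(\sigma)$ and hence $\Or(x)\subseteq\F^{ss}(\sigma)$; then identify $T_x\F^{ss}(\sigma)=\Df^s_x$ via the foliation property ($\F^{ss}(\sigma)=\F^{ss}(x)$ since leaves through a common point coincide); and finally collide this with the cited fact $X(x)\in\Df^c_x$ and the direct sum $T_xM=\Df^s_x\oplus\Df^c_x$ to force $X(x)=0$, a contradiction.

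You are right to single out the passage from leaf-invariance to $X(x)\in\Df^s_x$ as the step needing care. What justifies it is precisely the sectional-hyperbolic domination condition (item $(2)$ of Definition~\ref{defin2}), which is what the Hirsch--Pugh--Shub machinery needs in order to produce a strong stable lamination whose leaves are $C^1$, $X_t$-equivariant, pairwise disjoint or equal, and tangent to $\Df^s_y$ at \emph{every} $y\in\Lambda$ — not merely at the base point of the leaf. Since your write-up explicitly flags this dependence rather than glossing over it, the argument is complete modulo those standard facts, which is the same reliance the paper implicitly makes by deferring to \cite{bm}.
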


\begin{proof}
See corollary 2 in \cite{bm}.
\end{proof}

All singularity $\sigma$ in an sectional-hyperbolic set, is hyperbolic, so your invariant manifolds $W^{uu}(\sigma)$ and $W^{ss}(\sigma)$ are well defined. The strong  stable manifold sectional $\F^{ss}(\sigma)$ it is a submanifold of $W^{ss}(\sigma)$, with respect to your dimension, exists two possibilities:

\begin{enumerate}
\item $dim(W^{ss}(\sigma)) = dim(\F^{ss} (\sigma))$, in this case $W^{ss}(\sigma) = \F^{ss} (\sigma)$;
\item $dim(W^{ss}(\sigma)) = dim(\F^{ss}(\sigma)) + 1$, in this case, we say that the singularity is {\em Lorenz-like}.
\end{enumerate}

Every singularity Lorenz-like is type-saddle hyperbolic set with at least two negative eigenvalues, one of which is real eigenvalue $\lambda$ with multiplicity one such that the real part of the other eigenvalues are outside the closed interval
$[\lambda, -\lambda]$.\\

Over a Lorenz-like singularity $\sigma \in \Lambda$, we have $F^{ss}(\sigma)$ is tangent to the subspace associated the eigenvalues with real part less than $\lambda$, and $\F^{ss}(\sigma)$ divide a $W^{ss}(\sigma)$ in two connected component. If $\Lambda$ intersect just one connected component of $W^{ss}(\sigma) \setminus \F^{ss}(\sigma)$, we say that the singularity Lorenz Like is {\em of boundary-type}.\\

We say that a cross section $\Sigma$ of $X$ is associated to a Lorenz-like singularity $\sigma$ in a sectional-hyperbolic set $\Lambda$, if $\Sigma$ is very close to $\sigma$, $\Sigma \cap \Lambda \neq \emptyset$ and one of the connected components of $W^{ss}(\sigma) \setminus \F^{ss}(\sigma)$ contains a point in $int(\Sigma)$.\\
Another important result about the sectional-hyperbolic sets, is the \emph{\textbf{hyperbolic lemma}} (see lemma 9 in \cite{bm}), which assert that any invariant subset $H$ without singularities of a sectional-hyperbolic set $\Lambda$,  is hyperbolic, in this case, we have to that $\Df^s_H=E^s_H$ and $\Df^c_H=E^u_H \oplus E^X_H$, so $W^{ss}(p)=\F^{ss}(p)$ for all $p \in H$.\\

Let $p,q\in M$, we say that $p\prec q$ if and only if for all $\epsilon>0$ exists a orbit from a point $\epsilon$-close to $p$ to a point $\epsilon$-close to $q$.

\begin{teor}[Anosov Connecting Lemma] \label{teor1}

If $X$ is an Anosov flow on a compact manifold $M$ and $p, q \in M $ satisfy that $p \prec q$, then there is a point $x \in M$, such that $\alpha(x)=\alpha(p)$ and  $\omega(x)=\omega(q)$.
\end{teor}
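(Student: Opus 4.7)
The plan is to construct the desired $x$ inside $W^u(p)\cap W^s(q)$ by two applications of the local product structure available for Anosov flows. Since $X$ is Anosov on the compact manifold $M$, uniform hyperbolicity supplies an $\varepsilon>0$ so that for every $y\in M$ the local strong manifolds $W^{ss}_\varepsilon(y)$ and $W^{uu}_\varepsilon(y)$ are defined, vary continuously with $y$, and (together with a small flow segment) produce the standard bracket: whenever $d(a,b)$ is small enough, there is a unique point $[a,b]\in W^s_{loc}(a)\cap W^u_{loc}(b)$.

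Unwinding $p\prec q$ gives sequences $p_n\to p$, $q_n\to q$ and times $T_n\geq 0$ with $X_{T_n}(p_n)=q_n$. If $(T_n)$ stays bounded, a subsequence converges to some $T$ with $X_T(p)=q$, so $p$ and $q$ lie on the same orbit and $x=p$ works trivially; hence I can assume $T_n\to\infty$.

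For all $n$ large enough I set $r_n:=[p_n,p]\in W^s_{loc}(p_n)\cap W^u_{loc}(p)$. From $r_n\in W^u_{loc}(p)\subseteq W^u(p)$ I obtain $\alpha(r_n)=\alpha(p)$; from $r_n\in W^s_{loc}(p_n)$ the distance $d(X_t(r_n),X_t(p_n))$ decays exponentially, so $X_{T_n}(r_n)$ eventually sits within $\varepsilon/2$ of $q_n$ and in particular tends to $q$. I then bracket at the other end: $x_n:=[q,X_{T_n}(r_n)]\in W^s_{loc}(q)\cap W^u_{loc}(X_{T_n}(r_n))$. Since $x_n\in W^s_{loc}(q)\subseteq W^s(q)$ we get $\omega(x_n)=\omega(q)$, and since the unstable manifold is flow invariant we have $W^u(X_{T_n}(r_n))=W^u(r_n)=W^u(p)$, so $x_n\in W^u(p)$ and therefore $\alpha(x_n)=\alpha(p)$. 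Taking $x=x_n$ for any sufficiently large $n$ concludes the proof.

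The only real point to watch is that the two brackets are defined, i.e.\ that $d(p_n,p)$ and $d(X_{T_n}(r_n),q)$ both lie below the uniform product-structure threshold; the first is immediate from $p_n\to p$, and the second follows from the exponential decay along $W^s_{loc}(p_n)$ combined with $q_n\to q$. Notationally one should also remember that for flows the bracket involves a small reparametrization in the flow direction, but for Anosov $X$ this is a standard and routine matter.
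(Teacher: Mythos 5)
The paper does not actually prove Theorem~\ref{teor1}: it is stated as a classical fact (the Anosov Connecting Lemma, attributed to the folklore of Anosov dynamics and used only as motivation for the sectional versions), so there is no in-paper argument to compare against. Your proposal is nevertheless correct and is the standard route: local product structure for the Anosov flow, a first bracket $r_n=[p_n,p]\in W^{ss}_{loc}(p_n)\cap W^u_{loc}(p)$ to slide onto the unstable of $p$ while preserving forward asymptotics toward $q$ (here the $T_n\to\infty$ reduction is exactly what makes $d(X_{T_n}(r_n),q)$ small), followed by a second bracket $x_n=[q,X_{T_n}(r_n)]$ to land in $W^s(q)\cap W^u(p)$, from which $\alpha(x_n)=\alpha(p)$ and $\omega(x_n)=\omega(q)$ follow immediately. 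It is worth noting that this argument is precisely the one that Theorem~\ref{teor2} abstracts: applied on a hyperbolic set $H$ rather than all of $M$, the double-bracket construction requires $W^u(p)\subseteq$ (ambient manifold) so that the intermediate points $r_n$ and $x_n$ exist, which is automatic when $H=M$ but becomes a genuine hypothesis in the sectional-hyperbolic setting of the paper; your writeup makes that dependence transparent. The only caveats you flag yourself (the flow-direction ambiguity in the bracket, handled by using strong stable/unstable leaves plus a bounded time shift, and the uniform threshold $\varepsilon$ for the bracket) are indeed the only delicate points, and you dispose of them correctly.
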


The following theorem is a generalization of the Anosov connecting lemma, which allows  to be used in hyperbolic sets, even when the flow is not Anosov.

\begin{teor}\label{teor2}
Let $H$ be a hyperbolic set of vector field $X$ on $M$. If $p, q \in  H$ and there are sequences $z_n \in H$, $t_n \in \Dr^+$ such that $z_n \to p$ and $X_{t_n}(z_n) \to q$, then there is $x \in M$ such that $\alpha(x)=\alpha(p)$ and $\omega(x)=\omega(q)$.
 \end{teor}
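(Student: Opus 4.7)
The plan is to exhibit a heteroclinic point $x\in W^u(p)\cap W^s(q)$; once this is done the conclusion is immediate, because membership in $W^u(p)$ forces $\alpha(x)=\alpha(p)$ and membership in $W^s(q)$ forces $\omega(x)=\omega(q)$ by the very definitions of the flow-saturated invariant manifolds. First I would dispose of the easy case: after passing to a subsequence, if $(t_n)$ is bounded then continuity of the flow yields $q\in\mathcal{O}(p)$ and $x=p$ works, so I henceforth assume $t_n\to\infty$.

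Next I would invoke the local product structure supplied by the invariant manifold theory of \cite{hps}: there are uniform constants $\epsilon,\delta>0$ such that for every pair $z,w\in H$ with $d(z,w)<\delta$ the intersection $W^u_\epsilon(w)\cap W^s_\epsilon(z)$ reduces to a single point of $M$, depending continuously on $(z,w)$. For $n$ large let $a_n$ be this unique bracket point obtained with $w=p$ and $z=z_n$. Since $a_n\in W^s_\epsilon(z_n)$, its forward orbit remains exponentially close to that of $z_n$, so $X_{t_n}(a_n)$ approaches the orbit of $q$ and, along a subsequence, converges to some $q'\in\mathcal{O}(q)$. By $X_t$-invariance of $W^u(p)$, each $X_{t_n}(a_n)$ still lies in $W^u(p)$, and hence $W^u(p)$ accumulates on $q'$.

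The last step is to upgrade this accumulation to a genuine intersection with $W^s(q)$. Fix a cross-section $\Sigma$ through $q'$ transverse to $X$ and let $\hat a_n\in\Sigma$ denote the flow-projection of $X_{t_n}(a_n)$, so that $\hat a_n\in\Sigma\cap W^u(p)$ and $\hat a_n\to q'$. The key estimate is that contraction of the unstable cone field under the long action of $DX_{t_n}$ forces $T_{\hat a_n}(\Sigma\cap W^u(p))$ to $C^1$-converge to $E^u_{q'}$, while $\Sigma\cap W^s(q)$ is tangent at $q'$ to $E^s_{q'}$; since $T_{q'}\Sigma=E^s_{q'}\oplus E^u_{q'}$, the two families of disks are transverse inside $\Sigma$, so for $n$ sufficiently large one obtains an intersection point $x\in\Sigma\cap W^u(p)\cap W^s(q)$, which is the desired heteroclinic point.

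The main obstacle is precisely this last tangent-plane control: it requires a cone-field contraction argument along the long orbit segments running from $z_n$ to $X_{t_n}(z_n)$, carried out in a neighborhood of $H$ where the hyperbolic splitting only extends to a cone family. A minor case distinction is also needed if $p$ or $q$ is a hyperbolic singularity, where $E^X$ vanishes: the cross-section reduction becomes a direct transverse intersection inside $T_qM$, and $\mathcal{O}(q)=\{q\}$ simplifies the accumulation step accordingly.
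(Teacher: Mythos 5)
The paper contains no proof of Theorem \ref{teor2}: it is quoted as a known consequence of the invariant-manifold theory of \cite{hps}, so there is no in-text argument to compare yours against. Your route --- produce a heteroclinic point $x\in W^u(p)\cap W^s(q)$ by bracketing $z_n$ against $p$ and then flowing the resulting piece of $W^u(p)$ for time $t_n$ until it meets the stable manifold of $q$ --- is the standard argument, and its overall structure is sound; the bounded-$t_n$ case and the final deduction $\alpha(x)=\alpha(p)$, $\omega(x)=\omega(q)$ from $x\in W^u(p)\cap W^s(q)$ are fine.

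Two flow-specific points need repair before this is a proof. First, the local product structure as you state it is false for flows: if $W^u_\epsilon$ and $W^s_\epsilon$ denote the flow-saturated manifolds, a transverse intersection is a local orbit arc rather than a single point, while the strong manifolds $W^{uu}_\epsilon(p)$ and $W^{ss}_\epsilon(z_n)$ have dimensions summing only to $\dim M-1$ and will in general miss each other. The correct bracket pairs the strong unstable manifold of $p$ with the local center-stable (flow-saturated stable) manifold of $z_n$; the forward orbit of the bracket point $a_n$ then shadows that of $z_n$ only up to a bounded time reparametrization, which is harmless but is exactly what you need to justify that some forward iterate of $a_n$ converges to a point of $\mathcal{O}(q)$. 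Second, in the last step transversality of the tangent planes at the single points $\hat a_n$ does not by itself force an intersection with $\Sigma\cap W^s(q)$: you also need the disks of $\Sigma\cap W^u(p)$ through $\hat a_n$ to have inner radius bounded below, i.e.\ the full conclusion of the inclination lemma, not only tangent-plane convergence. This is available in your setting: since $z_n\to p$, the bracket points $a_n$ converge to $p$ and so lie well inside $W^{uu}_\epsilon(p)$, and intrinsic expansion of $X_{t_n}$ along strong unstable leaves (valid because the orbit segment of $a_n$ stays in the $\epsilon$-neighborhood of $H$ where the extended splitting and cone field live) yields disks of uniform size around $X_{t_n}(a_n)$ inside $W^u(p)$. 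With those two repairs, together with your remark on the singular case, the argument closes.
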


As previously mentioned, Bautista and Morales generalized the
Theorem \ref{teor1}, in the sectional-hyperbolic dynamics, for
sectional-Anosov flows in dimension three:

\begin{teor}[Sectional-Anosov Connecting Lemma]\label{teor3}
If $X$ is a sectional-Anosov flow on a compact 3-manifold $M$, $p \in M(X)$ and $q \in M$, satisfy that $p \prec q$, and $\alpha(p)$ don’t have singularities, then there is $x \in M$ such that $\alpha(x)=\alpha(p)$ and $\omega(x)=\omega(q)$ or $\omega(x)$ is a singularity.
\end{teor}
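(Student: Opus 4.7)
The plan is to reduce the statement to Theorem \ref{teor2} applied to the hyperbolic set $\alpha(p)$, treating carefully the possibility that the candidate connecting orbit escapes through a singularity of $M(X)$. First I would invoke the hyperbolic lemma: since $\alpha(p)$ contains no singularities and sits inside the sectional-hyperbolic set $M(X)$, it is a hyperbolic set. Classical invariant manifold theory then furnishes strong stable and unstable leaves at every point of $\alpha(p)$, varying continuously in a neighborhood $U \supset \alpha(p)$. Because $X_{-t}(p) \in U$ for all sufficiently large $t$, flowing a local strong unstable leaf forward produces a well-defined local unstable manifold attached to the orbit of $p$.

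Next, I would unpack $p \prec q$ by extracting sequences $p_n \to p$ and $t_n \to \infty$ with $X_{t_n}(p_n) \to q$. Using the local product structure at $p$, each $p_n$ (for $n$ large) can be written as the intersection of a local strong stable leaf with the local unstable manifold of a nearby point $a_n$ on $W^u(p)$. On one hand, $a_n \in W^u(p)$ gives $\alpha(a_n) = \alpha(p)$ by the classical hyperbolic theory. On the other hand, $p_n$ and $a_n$ share a strong stable leaf, so $d(X_t(a_n), X_t(p_n)) \to 0$ exponentially as $t \to \infty$, whence $X_{t_n}(a_n)$ is also close to $q$ for $n$ large.

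Passing to a subsequential limit $x = \lim_n a_n$, continuity of the unstable foliation gives $\alpha(x) = \alpha(p)$. The analysis of $\omega(x)$ splits into two cases. If the forward orbit of $x$ stays bounded away from the singularities of $M(X)$, then the hyperbolic lemma places it in a hyperbolic subset and a standard shadowing argument, or a direct appeal to Theorem \ref{teor2}, yields $\omega(x) = \omega(q)$. Otherwise, the forward orbit of $x$ enters arbitrarily small neighborhoods of some singularity $\sigma$, and the aim is to conclude that $x$ lies in the stable manifold of $\sigma$, so that $\omega(x) = \{\sigma\}$.

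The hard part will be this last dichotomy: excluding the possibility that $X_t(x)$ approaches $\sigma$ at arbitrarily late times without ever converging to it. Here the three-dimensional hypothesis is decisive, since the central subbundle $\Df^c$ has dimension two and Lemma \ref{lema1} gives $\F^{ss}(\sigma) \cap M(X) = \{\sigma\}$. A cross section $\Sigma$ associated to $\sigma$ meets $M(X)$ in a set separated from $\F^{ss}(\sigma) \cap \Sigma$ by a single expanding direction transverse to the flow, and a Poincar\'e-return and inclination-lemma argument on $\Sigma$ should force $x \in W^{ss}(\sigma)$ in this case. Extending this cross-section analysis to codimension-one sectional-hyperbolic sets in higher dimensions is precisely the step addressed by the paper's main theorem.
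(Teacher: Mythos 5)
Your opening reductions match the outer shell of the paper's argument (the paper quotes Theorem \ref{teor3} from \cite{bm1} and proves the generalization, Theorem \ref{teor4}, whose proof specializes to it): $\alpha(p)$ is hyperbolic by the hyperbolic lemma, and the points $p_n$ coming from $p\prec q$ are projected along the strong stable leaves $\F^{ss}$ onto unstable leaves, giving points $a_n$ with $\alpha(a_n)=\alpha(p)$ and $X_{t_n}(a_n)\to q$. But the step ``pass to a subsequential limit $x=\lim_n a_n$'' is vacuous: since $a_n$ is the stable projection of $p_n\to p$ onto $W^{uu}(p)$, one has $a_n\to p$, so your $x$ is $p$ itself, and the theorem would then be asserting $\omega(p)=\omega(q)$ or $\omega(p)$ singular, which is false in general. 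No limit of the $a_n$ can produce the connecting orbit. What the paper actually does is run a dichotomy on the \emph{family} of forward orbits $\mathcal{O}^+(a_n)$: if some subfamily stays uniformly away from $Sing(X)$, those orbits (together with their backward orbits, asymptotic to the nonsingular $\alpha(p)$) lie in a compact invariant nonsingular, hence hyperbolic, set and Theorem \ref{teor2} is applied to the whole sequence $a_n\to p$, $X_{t_n}(a_n)\to q$ to produce a genuinely new point $x$; otherwise one extracts $p\prec\sigma$ for a singularity $\sigma$ and reduces to that case. Your dichotomy is instead stated for a single orbit of the (trivial) limit point, so the case analysis is attached to the wrong object.

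The second, and central, gap is the singular case, which you defer with ``a Poincar\'e-return and inclination-lemma argument on $\Sigma$ should force $x\in W^{ss}(\sigma)$.'' First, the implication you aim for is false as stated: an orbit entering arbitrarily small neighborhoods of $\sigma$ at arbitrarily late times need not lie in $W^{s}(\sigma)$ (typical orbits in the geometric Lorenz attractor do exactly this), so no argument can ``force'' your $x$ into the stable manifold; the correct conclusion is only that \emph{some} point on an unstable leaf has a singularity (possibly a different one, $\sigma^*$) as its omega-limit. Second, the machinery needed here is the actual content of the theorem and is absent from your sketch: reduce via shadowing near $\alpha(p)$ and the product-structure Property $(Q)$ to a periodic point $p'$ with $p'\prec\sigma$ (Theorem \ref{teor11}); then, at the Lorenz-like singularity $\sigma$ (Lorenz-likeness itself must be established, using Lemma \ref{lema1} and $\sigma\in Cl(W^u(\mathcal{O}(p')))$), take cross sections associated to $\sigma$, a fundamental domain $[a,b]\subset W^{uu}(p')$ and its return map, and either obtain $W^{uu}(p')\cap W^{s}(\sigma)\neq\emptyset$ directly or produce a point $q^*$ satisfying property $P_{(\Sigma_0)}$; then invoke the characterization of omega-limit sets which are closed orbits (Theorems \ref{teor7}--\ref{teor9}, built on the sectional partitions of Sections 2--3) and exclude the periodic alternative with the inclination lemma (the closed curve transverse to $\F^s_{\Sigma_1}$ contradiction), concluding $\omega(q^*)$ is a singularity, and finally transfer back to $p$ along strong stable manifolds. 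Without this chain, the proposal proves only the easy, nonsingular half of the statement.
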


As the main result of this paper, we prove the following
generalization of the previous theorem:

\begin{teor}[Main: Sectional Connecting Lemma]\label{teor4}
Let $\Lambda$ a sectional-hyperbolic set of codimension $1$ of a vector field $X$ on $M$, such that $W^{u}(H) \subseteq \Lambda$ for all hyperbolic subset $H$ of $\Lambda$. If $p, q \in \Lambda$ satisfy that $p \prec q$ and $\alpha(p)$ don't have singularities,then there is $x \in M$ such that $\alpha(x)=\alpha(p)$ and $\omega(x)=\omega(q)$ or $\omega(x)$ is a singularity.
\end{teor}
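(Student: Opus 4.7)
The plan is to mimic the hyperbolic connecting lemma (Theorem \ref{teor2}): I will produce a point $x$ lying on the strong unstable manifold $W^{uu}(p)$ of $p$ and, after some finite positive time, on the strong stable manifold of a point in the orbit of $q$. The codimension-one hypothesis keeps Poincar\'e return maps on cross sections under control --- the unstable direction in each section is one-dimensional --- and the blanket hypothesis $W^u(H) \subseteq \Lambda$ for every hyperbolic $H \subseteq \Lambda$ lets me place the unstable disk of $p$ entirely inside $\Lambda$. The dichotomy in the conclusion reflects the fact that $\omega(q)$ may contain a Lorenz-like singularity, in which case the candidate point can get absorbed into the stable manifold of that singularity instead of genuinely connecting to $\omega(q)$.

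\emph{Unstable side.} Since $\alpha(p)$ contains no singularity, the hyperbolic lemma cited after Definition \ref{defin2} forces $\alpha(p)$ to be hyperbolic; invariant manifold theory then produces a well-defined strong unstable disk $W^{uu}(p)$ through $p$ inside $W^u(\alpha(p))$, and the blanket hypothesis places this disk entirely inside $\Lambda$. Any $x \in W^{uu}(p)$ automatically satisfies $\alpha(x) = \alpha(p)$, so only the forward orbit of $x$ has to be controlled. \emph{Regular case.} If $\omega(q)$ has no singularity then $\omega(q)$ is hyperbolic (hyperbolic lemma again), so $W^{ss}(q) = \F^{ss}(q)$ is a genuine strong stable manifold; combining sequences $z_n \to p$, $X_{t_n}(z_n) \to q$ with $t_n \to \infty$ provided by $p \prec q$ with a $\lambda$-lemma applied in cross sections $\Sigma_p \ni p$ and $\Sigma_q \ni q$ shows that the forward iterates of $W^{uu}(p) \cap \Sigma_p$ align $C^1$-closely with the local unstable foliation near $\Sigma_q$ and so must cross $W^{ss}(q) \cap \Sigma_q$ transversally, producing the desired $x$.

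\emph{Singular case.} Now suppose $\omega(q)$ contains a singularity $\sigma$. Lemma \ref{lema1} together with the codimension-one assumption forces $\sigma$ to be Lorenz-like of boundary-type. A cross section $\Sigma$ associated to $\sigma$ is a disk of dimension $\dim M - 1$ carrying a one-dimensional $W^{uu}$-foliation, a codimension-one $\F^{ss}$-foliation, and a separating codimension-one submanifold $W^{ss}(\sigma) \cap \Sigma$ exactly one side of which is accumulated by $\Lambda$. Follow the forward orbit of $q$ through the sequence of returns to cross sections near each singularity it visits, and at every return transport the one-dimensional disk $W^{uu}(p) \cap \Sigma_p$ by a $\lambda$-lemma. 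Two mutually exclusive alternatives arise: the transported disk either crosses the $\F^{ss}$-leaf of some $q' \in \mathcal{O}(q)$ between consecutive singularity approaches --- producing $x$ with $\omega(x) = \omega(q)$ --- or it accumulates on and is absorbed into $W^{ss}(\sigma)$ for one of the visited Lorenz-like singularities, in which case $\omega(x) = \{\sigma\}$, a singularity, as allowed by the statement.

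The main obstacle is precisely this last case: one must bookkeep an infinite sequence of cross-section returns and propagate $\lambda$-lemma estimates through orbit segments passing arbitrarily close to singularities. This requires, as a preparatory step, the higher-dimensional analog of the Bautista--Morales characterization of sectional-hyperbolic omega-limits that are closed orbits from \cite{bm2}, which the introduction announces as an auxiliary result of this paper; that characterization is what guarantees the ``trapped'' alternative above really yields a single singularity rather than a more intricate invariant set, and it is the technical heart on which the whole dichotomy rests.
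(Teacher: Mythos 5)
Your outline has the right ingredients (placing $x$ on $W^{uu}(p)\subseteq\Lambda$, and leaning on the higher-dimensional analog of the Bautista--Morales characterization of closed-orbit omega-limits), but the case split you propose is the wrong dichotomy and it leaves the hard step unproved. You split on whether $\omega(q)$ contains a singularity; the real obstruction is whether the \emph{connecting orbit segments} from near $p$ to near $q$ accumulate a singularity of $\Lambda$. Even when $\alpha(p),\omega(p),\alpha(q),\omega(q)$ are all nonsingular, the orbits of the approximating points can pass arbitrarily close to a Lorenz-like singularity, and there your ``regular case'' collapses: the $\lambda$-lemma transport of $W^{uu}(p)\cap\Sigma_p$ is not uniform through such passages, so you cannot conclude that the transported disk crosses $W^{ss}(q)\cap\Sigma_q$. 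The paper handles this by first disposing of the cases where $\omega(p)$, $\omega(q)$ or $\alpha(q)$ contain a singularity (then $p\prec\sigma$), and otherwise looking at the points $z_n'\in W^{uu}(p)$ with $X_{t_n}(z_n')\to q$: either their forward orbits accumulate some singularity $\sigma$ (again $p\prec\sigma$), or they stay uniformly away from $Sing(X)$, in which case everything sits inside a genuinely hyperbolic set and Theorem \ref{teor2} applies verbatim. Your proposal never produces this hyperbolic set, which is what actually justifies the ``regular'' alternative.

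The second gap is in your singular alternative: you assert that the transported disk is either crossed with an $\F^s$-leaf of $\mathcal{O}(q)$ or ``absorbed into $W^{ss}(\sigma)$'' with $\omega(x)=\{\sigma\}$, but that absorption claim is exactly what must be proved, and bookkeeping $\lambda$-lemma estimates through infinitely many near-singularity returns does not do it. The paper's route is: reduce to the case $p$ periodic and $p\prec\sigma$ (Theorem \ref{teor11}, via shadowing, graph transform and the property $(Q)$), and in that case (Theorem \ref{teor10}) work with a fundamental domain $D^u=[a,b]$ of $W^{uu}(p)$ and the return map to a cross section associated to $\sigma$, define the extremal parameters $q^*,q^{**}$, use the codimension-one transversality of $\Pi_D(D^u)$ to $\F^s_{\Sigma_1}$ to rule out degenerate configurations, verify that $q^*$ satisfies the property $P_{(\Sigma_0)}$, invoke the characterization theorem to get that $\omega(q^*)$ is a closed orbit, and exclude the periodic-orbit possibility by an Inclination Lemma argument (a closed curve transverse to the stable foliation). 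So the characterization is applied to the boundary point $q^*$ of the maximal domain of the return map, not to a vaguely ``trapped'' disk. Also, your claim that $\sigma$ must be Lorenz-like \emph{of boundary-type} is neither justified nor needed: the paper only uses that $\sigma$ is Lorenz-like (because $\sigma\in Cl(W^u(\mathcal{O}(p)))$), and it explicitly works with cross sections on both sides of $\F^{ss}(\sigma)$.
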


Note that in the theorem \ref{teor4}, two of the hypotheses of the theorem \ref{teor3} are replaced by more general ones; specifically, it is not requested that $M$ be of dimension three, but that $\Lambda$ be a sectional-hyperbolic set of codimension one, and the hypothesis that $X$ is a Sectional-Anosov flow is changed by the condition that $\Lambda$ contains the unstable manifolds their hyperbolic subsets; these variations, generate a change in the proof, however, some of the arguments are similar.\\

As direct consequences of the main theorem we have that:

\begin{coro}\label{coro1}
Every sectional-hyperbolic Lyapunov stable set of codimension $1$ of a vector field $X$ over $M$, satisfy that if  $p,q \in \Lambda$, $p \prec q$ and $\alpha(p)$ don't have singularities, then there is $x \in M$ such that $\alpha(x)=\alpha(p)$ and $\omega(x)=\omega(q)$ or $\omega(x)$ is a singularity.
\end{coro}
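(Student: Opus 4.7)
The plan is to deduce this corollary directly from Theorem \ref{teor4}, since the hypotheses on codimension, on the chain relation $p\prec q$, and on $\alpha(p)$ are already present in the statement of the corollary. The only missing piece is the structural hypothesis $W^{u}(H)\subseteq \Lambda$ for every hyperbolic subset $H$ of $\Lambda$. Hence the whole task reduces to proving that Lyapunov stability of $\Lambda$ forces this inclusion.

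To verify it, I would take an arbitrary hyperbolic subset $H\subseteq \Lambda$, a point $p\in H$, and a point $x\in W^{u}(p)$; the goal is to show $x\in\Lambda$. By definition of the unstable manifold, $d(X_{-t}(x),X_{-t}(p))\to 0$ as $t\to\infty$, and since $\Lambda$ is invariant, $X_{-t}(p)\in \Lambda$ for every $t\geq 0$. Given any neighborhood $U$ of $\Lambda$, Lyapunov stability provides a neighborhood $W$ of $\Lambda$ with $X_{s}(W)\subseteq U$ for every $s\geq 0$. For $T>0$ large enough, $X_{-T}(x)$ lies in $W$, so $x=X_{T}(X_{-T}(x))\in U$. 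Since $U$ was an arbitrary neighborhood of the closed set $\Lambda$, it follows that $x\in\Lambda$. Taking the union over $p\in H$ yields $W^{u}(H)\subseteq\Lambda$.

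Once this inclusion is established, Theorem \ref{teor4} applies verbatim to the pair $(p,q)$, producing the point $x\in M$ with $\alpha(x)=\alpha(p)$ and either $\omega(x)=\omega(q)$ or $\omega(x)$ a singularity. I do not anticipate any serious obstacle here; the only delicate point is noticing that the \emph{forward} control provided by Lyapunov stability, when combined with the \emph{backward} convergence of points of $W^{u}(p)$ to the orbit of $p\in\Lambda$, is exactly what is needed to trap $W^{u}(H)$ inside $\Lambda$, after which the corollary is just a specialization of the main theorem.
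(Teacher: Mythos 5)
Your proposal is correct and takes essentially the same route as the paper, which states the corollary as a direct consequence of Theorem \ref{teor4}; the content you supply --- that Lyapunov stability of $\Lambda$ combined with the backward convergence of points of unstable manifolds forces $W^{u}(H)\subseteq\Lambda$ for every hyperbolic subset $H$ --- is exactly the implicit verification of the missing hypothesis. One cosmetic remark: for $x\in W^{u}(p)$ (the flow-saturation of $W^{uu}(p)$) the backward orbit converges to the backward orbit of $X_{s}(p)$ for some time shift $s$ rather than of $p$ itself, but since that orbit also lies in the invariant set $\Lambda$, your trapping argument via the neighborhoods $W\subseteq U$ goes through unchanged.
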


\begin{coro}	\label{coro2}
Every sectional-Anosov flow of codimension $1$ of a vector field over $M$, satisfy that if  $p,q \in \Lambda$, $p \prec q$ and $\alpha(p)$ don't have singularities, then there is $x \in M$ such that $\alpha(x)=\alpha(p)$ and $\omega(x)=\omega(q)$ or $\omega(x)$ is a singularity.
\end{coro}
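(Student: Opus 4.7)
The plan is to reduce the statement to the hyperbolic connecting lemma (Theorem \ref{teor2}) whenever $\omega(q)$ contains no singularity, and to use cross sections at Lorenz-like singularities to handle the remaining case.

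I first set up the hyperbolic structure at the $\alpha$-side. Since $\alpha(p)$ is a compact invariant subset of $\Lambda$ with no singularities, the hyperbolic lemma gives that $H_0:=\alpha(p)$ is a hyperbolic subset of $\Lambda$. The hypothesis $W^{u}(H)\subseteq\Lambda$ for every hyperbolic subset $H$ then places $W^{u}(H_0)$, hence $p$, inside $\Lambda$. From $p\prec q$ I extract sequences $z_n\to p$ and times $t_n\to\infty$ with $X_{t_n}(z_n)\to q$; using the local product structure near $p\in W^{u}(H_0)$ I arrange $z_n\in W^{u}(H_0)\subseteq\Lambda$, so that $X_{s}(z_n)\in\Lambda$ for every $s\in[0,t_n]$.

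Now I split into two cases. If $\omega(q)$ has no singularities, form the compact invariant set $K:=H_0\cup\omega(q)\cup\overline{\bigcup_n\mathcal{O}(z_n)}$ and check (using that $z_n\to p\in W^{u}(H_0)$ and $X_{t_n}(z_n)\to q\in W^{s}(\omega(q))$) that $K\subseteq\Lambda$ is a compact invariant set without singularities. By the hyperbolic lemma $K$ is hyperbolic, and Theorem \ref{teor2} applied to $K$ yields $x\in M$ with $\alpha(x)=\alpha(p)$ and $\omega(x)=\omega(q)$, which is the first alternative. If instead a singularity $\sigma\in\omega(q)$ exists, then the codimension-one hypothesis forces $\sigma$ to be Lorenz-like with a one-dimensional strong unstable direction. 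Choose a cross section $\Sigma$ associated to $\sigma$ in the sense of the introduction. Because $\dim\Df^{c}=2$, the strong stable foliation $\F^{ss}$ restricts to a codimension-one foliation of $\Sigma$ whose leaf space is one-dimensional, and the Poincar\'e return map of $X$ to $\Sigma$ descends to a uniformly expanding one-dimensional map on $\Sigma/\F^{ss}$, in direct analogy with the geometric Lorenz model. Applying this quotient dynamics to the hit points of $X_{[0,t_n]}(z_n)$ with $\Sigma$ produces a dichotomy: either the hit points stay uniformly away from $\F^{ss}(\sigma)\cap\Sigma$, and a diagonal argument yields a limit orbit $\mathcal{O}(x)\subseteq\Lambda$ with $\omega$-limit equal to $\omega(q)$; or they approach $\F^{ss}(\sigma)\cap\Sigma$, in which case Lemma \ref{lema1} forces the forward orbit of $x$ to enter $W^{ss}(\sigma)$ and therefore $\omega(x)=\{\sigma\}$. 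In both sub-cases the limit orbit is constructed within $W^{u}(H_0)$, which gives $\alpha(x)=\alpha(p)$ by the invariant manifold theory of $H_0$.

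The main obstacle is the quotient analysis in arbitrary dimension. Bautista and Morales use in dimension three that the cross section is two-dimensional and that $\Sigma/\F^{ss}$ is an interval, reducing the dichotomy to interval dynamics; in arbitrary codimension one, $\Sigma$ is higher-dimensional, and one must first prove that the return map is genuinely governed by its one-dimensional expanding quotient factor on $\Sigma/\F^{ss}$. This is precisely the high-dimensional extension of \cite{bm2} announced in the introduction, characterizing when an $\omega$-limit inside a codimension-one sectional-hyperbolic set reduces to a closed orbit. I would establish that extension first, using the sectional area expansion of Definition \ref{defin2}(3) to obtain the uniform expansion of the quotient map, and then combine it with the cross-section analysis above to rule out periodic orbits in the extracted subsequence and conclude both alternatives of the theorem.
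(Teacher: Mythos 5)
The statement is presented in the paper as a direct corollary of the main theorem (Theorem \ref{teor4}), and its intended proof is essentially one line: for a sectional-Anosov flow the set $\Lambda=M(X)=\bigcap_{t\geq 0}X_t(M)$ is an attracting set, so for any hyperbolic subset $H\subseteq\Lambda$ every point of $W^{u}(H)$ has a well-defined full past orbit converging to $H$, hence lies in $M(X)$; that verifies the hypothesis $W^{u}(H)\subseteq\Lambda$ and Theorem \ref{teor4} gives the conclusion. Your proposal never invokes Theorem \ref{teor4}; instead you attempt to re-derive the entire connecting lemma from first principles, which is a far heavier task than a corollary requires.

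Beyond the mismatch in scope, there is a concrete gap in the singularity-free branch. You form
$K:=H_0\cup\omega(q)\cup\overline{\bigcup_n\mathcal{O}(z_n)}$
and assert that $K$ is singularity-free. This does not follow. The endpoints $p$ and $q$ controlling $\alpha(p)$ and $\omega(q)$ say nothing about what the \emph{intermediate} segments $X_{[0,t_n]}(z_n)$ accumulate on; they can, and in the interesting cases do, spend long times near a Lorenz-like singularity $\sigma$, so $\sigma\in K$ and the hyperbolic lemma is not applicable. This is precisely the dichotomy handled in the paper's proof of Theorem \ref{teor4}: either the orbits accumulate on some $\sigma\in Sing(X)$ (whence $p\prec\sigma$ and Theorem \ref{teor11} applies, producing the singular alternative), or they stay uniformly bounded away from $Sing(X)$ (in which case one builds the hyperbolic set $H$ correctly and invokes Theorem \ref{teor2}). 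Your proposal conflates these two branches into the wrong split (singularities in $\omega(q)$ versus not).

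Finally, your second branch is not a proof but an announced program: you correctly identify that it requires a high-dimensional analogue of the Bautista--Morales characterization of closed $\omega$-limits via quotient dynamics on $\Sigma/\F^{ss}$, but you leave the uniform expansion of the quotient return map, the dichotomy for hit points, and the transfer back to $\alpha(x)=\alpha(p)$ entirely unproved. The paper carries out exactly this program (Sections 2--4 construct sectional partitions, Theorems \ref{teor6}--\ref{teor9} give the characterization in codimension one, and Theorems \ref{teor10}--\ref{teor11} convert it into the two particular cases of the connecting lemma). All of this machinery is already encapsulated in Theorem \ref{teor4}; the corollary should simply cite it after observing that $M(X)$ is attracting.
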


To proof our main theorem, in section 2, we will introduce the definition of sectional partition \footnote {This definition results from making a modification to the definition of the singular partition introduced in \cite{bm2}}, we will prove its existence for invariant compact sets; in section 3 we will proof some properties of these partitions on sectional-hyperbolic sets of codimension one; in section 4, we will use the sectional partitions to characterize of omega-limit sectional-hyperbolic sets which are closed orbits the transitive hyperbolic sectional sets in codimension one, which are closed orbits, and with this characterization, finally in section 5 we will proof the main theorem.

\section{Sectional Partition}

Denote by $\mathcal{R}'= \{S_1, S_2, ... S_k\}$ a finite collection of cross sections, then we define:
$$\mathcal{R}=\bigcup_{i=1}^kS_i \,\,\text{  ,  }\,\, \partial \mathcal{R}=\bigcup_{i=1}^k\partial
S_i \,\,\text{  ,  } \,\,int(\mathcal{R})=\bigcup_{i=1}^k int(S_i).$$
The diameter of $\mathcal{R}$ is given by the max of the diameters of the elements of
 $\mathcal{R}'$, we  say that is of the time  $\epsilon$ if
$\mathcal{R} \cap
X_{[-\epsilon,\epsilon]}(y)=\{y\}$ for all $y \in \mathcal{R}$.

\begin{defin}\label{defin3}
A {\em sectional partition} of a compact invariant set
 $H$ of $X$  is a  finite and disjoint collection of cross sections $\mathcal{R}'$ of $X$ with nonzero time, such that:
$$Sing(X)\cap H = \{y \in H: X_t(y) \notin int(\mathcal{R}),\forall t
\in \mathbb{R} \}.$$
\end{defin}

\begin{teor}[Existence of sectional partitions]\label{teor5}

Let $\Lambda$ be a compact and invariant set of the field $X$ over $M$. If $\Lambda$ is not a singularity and every  singularity of $\Lambda$ is hyperbolic, then for all $\delta>0$  there is a sectional partition of diameter less than $\delta$.
\end{teor}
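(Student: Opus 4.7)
Since hyperbolic singularities are isolated and $\Lambda$ is compact, $Sing(X)\cap\Lambda=\{\sigma_1,\dots,\sigma_m\}$ is finite. The plan is to shield each singularity by a small disjoint neighborhood, cover the remaining compact part of $\Lambda$ by flow boxes, and then adjust the resulting cross sections to be pairwise disjoint. First I would choose pairwise disjoint open neighborhoods $U_j\ni\sigma_j$ of diameter less than $\delta$, taken so small that $\sigma_j$ is the only invariant point of $X$ in $U_j$. Then for any regular $y\in\Lambda$ the orbit $\Or(y)$ is connected, so it cannot lie entirely in $\bigcup_j U_j$: if it did, it would lie in a single $U_j$, forcing both $\alpha_X(y)$ and $\omega_X(y)$ to equal $\{\sigma_j\}$, and hence $y\in W^{s}_{loc}(\sigma_j)\cap W^{u}_{loc}(\sigma_j)=\{\sigma_j\}$, contradicting that $y$ is regular. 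Thus every regular orbit of $\Lambda$ meets the compact set $V:=\Lambda\setminus\bigcup_j U_j$.

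Next I would cover $V$ by flow boxes. Each $z\in V$ is regular, so the flow-box theorem yields a cross section $\Sigma_z$ through $z$ of diameter less than $\delta$, with $z\in int(\Sigma_z)$, of positive time $\epsilon_z>0$, and (after a further shrinkage) containing no singularity; the open tube $B_z:=X_{(-\epsilon_z,\epsilon_z)}(int(\Sigma_z))$ then contains $z$. By compactness of $V$ I extract a finite subcover $B_{z_1},\dots,B_{z_k}$.

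The main obstacle is upgrading the finite family $\Sigma_{z_1},\dots,\Sigma_{z_k}$ to a pairwise disjoint family without losing the coverage of $V$. My approach is to fix a Lebesgue number for the open cover $\{B_{z_i}\}$ of $V$ and replace each $\Sigma_{z_i}$ by a compactly contained subdisc so small that the shrunken tubes still cover $V$; then, using the openness of codimension-$1$ transversality, I resolve the finitely many remaining intersections one at a time, either by sliding an offending section along $X$ by a tiny time (it remains a cross section and, by thinness of the tubes, the coverage of $V$ is preserved) or by a small ambient perturbation. This procedure terminates with pairwise disjoint sections $\Sigma'_{z_1},\dots,\Sigma'_{z_k}$.

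Finally I verify Definition~\ref{defin3}: the family $\mathcal{R}'=\{\Sigma'_{z_1},\dots,\Sigma'_{z_k}\}$ is finite, disjoint, of diameter less than $\delta$, and of positive time. Each $\sigma_j$ satisfies $\Or(\sigma_j)=\{\sigma_j\}\not\subset int(\mathcal{R})$ because no $\Sigma'_{z_i}$ contains a singularity. Conversely, for any regular $y\in\Lambda$ there is $t$ with $X_t(y)\in V\subseteq\bigcup_i B_{z_i}$, and hence $X_{t+s}(y)\in int(\Sigma'_{z_i})$ for some $i$ and some $|s|<\epsilon_i$. Thus $\mathcal{R}'$ is the desired sectional partition of $\Lambda$ of diameter less than $\delta$.
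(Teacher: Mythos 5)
Your overall strategy matches the paper's: isolate the finitely many (hyperbolic, hence isolated) singularities inside small disjoint neighborhoods, cover the compact complement of these neighborhoods in $\Lambda$ by flow boxes around regular points, extract a finite subcover, adjust the resulting cross sections to be pairwise disjoint by sliding along the flow, and verify Definition~\ref{defin3}. The paper likewise dispatches disjointness with ``if necessary we can move them through the flow,'' so that part is comparable.

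There is, however, a gap in the step that shields the singularities. You choose $U_j$ ``so small that $\sigma_j$ is the only invariant point of $X$ in $U_j$,'' i.e.\ the only fixed point. From $\mathcal{O}(y)\subseteq U_j$ you then infer $\alpha_X(y)=\omega_X(y)=\{\sigma_j\}$. This does not follow from uniqueness of the fixed point alone: $\omega_X(y)$ and $\alpha_X(y)$ are nonempty compact invariant subsets of $\overline{U_j}$, and a priori these could be periodic orbits or other nontrivial invariant sets — uniqueness of the equilibrium does not rule them out. What is actually needed (and what the paper's choice of $\delta_0$ encodes) is the \emph{isolating} property of a hyperbolic singularity: one can shrink $U_j$ so that the maximal invariant set $\bigcap_{t\in\mathbb{R}}X_t(\overline{U_j})$ equals $\{\sigma_j\}$. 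This follows from the Hartman--Grobman theorem, since the only bounded orbit of a hyperbolic linear flow is the origin. Once you invoke the isolating property, your entire subsequent deduction becomes unnecessary: no complete orbit other than $\{\sigma_j\}$ can lie in $U_j$, so every regular orbit of $\Lambda$ meets $V$ directly. As written, though, the weaker hypothesis you state does not carry the argument, and the clause ``$y\in W^{s}_{\mathrm{loc}}(\sigma_j)\cap W^{u}_{\mathrm{loc}}(\sigma_j)=\{\sigma_j\}$'' is doing work that the stated hypothesis does not license.
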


\begin{proof}
Let $\delta>0$,  as all the singularities of  $\Lambda$ are hyperbolic, we know that $\Lambda$ has a finite number of singularities, so there exists  $\delta_0$ such that:
$$Sing(X)\cap \Lambda=\bigcap_{t\in\mathbb{R}}X_t \left( \bigcup_{\sigma\in
Sing(X)\cap \Lambda} B_{\delta_0} (\sigma)\right).$$
We define:
$$H=\Lambda \backslash \left( \bigcup_{\sigma\in Sing(X)\cap\Lambda}B_{\delta_0}(\sigma)\right).$$
If $H=\emptyset$, then $\Lambda$ does not contain regular  orbits, in this case it is a singularity, this contradicts the hypothesis, then $H\neq \emptyset$ and $H \cap Sing(X)=\emptyset$. Then for all $z\in H$  there is a cross section   $R_z$ with $z \in int(R_z)$,  of arbitrarily small diameter. We consider this  diameter much smaller than     $\delta$ and defined:
$$V_z=\bigcup_{t\in(-1,1)} X_t(int(R_z));$$
clearly $z \in V_z$, so we have that
$\{V_z: z\in H\}$  is an open covering of $H$, but since $H$ is
 compact (a closed set inside a compact set), there is
$\{z_1,...,z_r\}$ such that:
$$H\subseteq \bigcup_{i=1}^r V_{z_i}.$$
Let us consider the rectangles $R_{z_1},R_{z_2},...,R_{z_r}$, if necessary we can move them through the flow and assume that they are pairwise
 disjoint. Now observe that $$\mathcal{R}'= \{R_{z_1}, ... ,R_{z_r}\};$$
satisfies the conditions of sectional partition. If $x\notin \Lambda \cap Sing(X)$, then
$$x \notin \bigcap_{t\in\mathbb{R}}X_t \left( \bigcup_{\sigma\in
Sing(X)\cap \Lambda} B_{\delta_0} (\sigma)\right)=Sing(X)\cap\Lambda$$
so there is $t_0 \in \Dr$ such that
$$X_{t_0}(x) \notin  \bigcup_{\sigma\in
Sing(X)\cap \Lambda} B_{\delta_0} (\sigma)$$
therefore $X_{t_0}(x) \in H$  and    $X_{t_0}(x) \in
V_{z_i} $  for some $ 1 \leq i \leq r $. We can say that   $ X_{t_0}(x) = X_{t_1}(w)$  for some
 $-1<t_1<1$  and $ w \in int(R_{z_i}) $, so $ X_{t_0-t_1}(x) = w \in int(R_{z_i})\subseteq int(\mathcal{R})$, where
$$x \notin \{y \in \Lambda : X_t(y) \notin int(\mathcal{R}),\forall t
\in \mathbb{R} \}$$
Therefore
$$\{y \in \Lambda : X_t(y) \notin int(\mathcal{R}),\forall t
\in \mathbb{R} \} \subseteq Sing(X)\cap \Lambda$$
since the other containment is immediate,  we get the result.

\end{proof}

Given a sectional partition $\mathcal{R}'$ of a compact invariant set $H$ of a field $X$  we define the function
$$ \Pi_{(\mathcal{R},int(\mathcal{R}))}: Dom(\Pi_{(\mathcal{R},int(\mathcal{R})})\subseteq
\mathcal{R} \rightarrow int(\mathcal{R})$$ with
$$Dom(\Pi_{(\mathcal{R},int(\mathcal{R})})=\{x \in \mathcal{R}: X_t(x)\in int(\mathcal{R})
\text{ for some } t > 0\}$$
given by
$$\Pi_{(\mathcal{R},int(\mathcal{R}))}(x) = X_{t(x)}(x),$$
where $t(x)$ is the time of return, i.e., the first $t>0$ for which
$X_t(x)\in int(\mathcal{R})$. In the remainder of this section we shall represent  $\Pi_{(\mathcal{R},int(\mathcal{R}))}$ only by $\Pi$.\\

Given $x \in S_i \in \mathcal{R'}$ we define $B_{\epsilon}(x,\mathcal{R})=B_\epsilon  (x) \cap S_i$.

\begin{lema}\label{lema2}
Let $\mathcal{R}'$ a sectional partition of the invariant compact set
 $H$ of $X$,  with  all its hyperbolic singularities,   then we have the following properties:

\renewcommand{\labelenumi}{\arabic{enumi}.}
\begin{enumerate}
\item [$(1)$] $H\cap \mathcal{R} \cap Dom(\Pi)\subseteq
int(Dom(\Pi))$ in $\mathcal{R}$ and $\Pi$ is $C^1$ in a neighborhood of $H\cap int(\mathcal{R})$ in $\mathcal{R}$;
\item [$(2)$] $(H\cap \mathcal{R}) \backslash Dom(\Pi) \subseteq \bigcup_{\sigma \in
Sing(X)\cap H} W^s(\sigma).$
\end{enumerate}
\end{lema}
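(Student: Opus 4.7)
The plan is to handle the two parts by a tubular flow box argument for the first and an omega-limit analysis for the second, exploiting the very definition of a sectional partition.

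For part (1), I would fix $x \in H \cap \mathcal{R} \cap Dom(\Pi)$ with $\Pi(x) = X_{t(x)}(x) \in int(\mathcal{R})$ and use that the cross sections making up $\mathcal{R}$ are transverse to $X$. Concretely, the tubular flow theorem provides a flow box around $\Pi(x)$ in $M$ inside which the piece of $int(\mathcal{R})$ through $\Pi(x)$ is a local transversal. By the implicit function theorem applied to the smooth map $(y,t)\mapsto X_t(y)$, there is an open neighborhood $U$ of $x$ in $\mathcal{R}$ and a $C^1$ function $\tau:U\to\mathbb{R}$ with $\tau(x)=t(x)$ and $X_{\tau(y)}(y)\in int(\mathcal{R})$ for every $y\in U$. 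Since $\tau$ is continuous and the excluded set $\bigcup_{i}\partial S_i$ is avoided by $\Pi(x)$, shrinking $U$ if needed one also ensures $\tau(y)$ is the first positive return time of $y$ to $int(\mathcal{R})$ (here the nonzero-time condition of $\mathcal{R}$ rules out that a smaller return could appear). Hence $U\subseteq Dom(\Pi)$ and $\Pi=X_{\tau(\cdot)}(\cdot)$ is $C^1$ on $U$, yielding both assertions of (1).

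For part (2), take $x \in (H \cap \mathcal{R}) \setminus Dom(\Pi)$, so $X_t(x)\notin int(\mathcal{R})$ for all $t>0$. Since $H$ is compact and invariant, $\omega(x)\subseteq H$ and is nonempty and connected. I claim that $\omega(x) \subseteq Sing(X)\cap H$. Suppose, for contradiction, that $y \in \omega(x)$ is regular; by the sectional partition definition applied to $y\in H$, there exists $t_0\in\mathbb{R}$ with $X_{t_0}(y)\in int(\mathcal{R})$. Around $X_{t_0}(y)$ build a flow box as in part (1); for any sequence $t_n\to\infty$ with $X_{t_n}(x)\to y$, the points $X_{t_0+t_n}(x)$ converge to $X_{t_0}(y)\in int(\mathcal{R})$, so for large $n$ there exist small $s_n$ with $X_{t_0+t_n+s_n}(x)\in int(\mathcal{R})$ and $t_0+t_n+s_n>0$, contradicting $x\notin Dom(\Pi)$.

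Hence every point of $\omega(x)$ is a singularity of $X$ in $H$. Because every singularity in $H$ is hyperbolic, there are only finitely many of them, and they are isolated. Since $\omega(x)$ is connected, it reduces to a single singularity $\sigma\in Sing(X)\cap H$. Therefore $X_t(x)\to\sigma$ as $t\to\infty$, i.e. $x\in W^s(\sigma)\subseteq \bigcup_{\sigma\in Sing(X)\cap H}W^s(\sigma)$, giving (2). The main obstacle is the contradiction step in (2): one must be careful that ``$int(\mathcal{R})$'' is open only inside $\mathcal{R}$ (not in $M$), so the flow-box construction around $X_{t_0}(y)$ is essential to translate nearness in $M$ into an actual hit of $int(\mathcal{R})$ at a positive time.
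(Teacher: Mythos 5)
Your proof is correct and follows essentially the same route as the paper's: in part (1) the continuous-dependence/flow-box argument giving that nearby points of $\mathcal{R}$ also hit $int(\mathcal{R})$ at positive time, hence $B_{\epsilon_x}(x,\mathcal{R})\subseteq Dom(\Pi)$ and $\Pi$ is $C^1$ there, and in part (2) the same contradiction obtained from a hypothetical regular point $r\in\omega(x)$ whose orbit meets $int(\mathcal{R})$ transversally. You merely make explicit some details the paper leaves implicit (the implicit function theorem for the return time, and the connectedness of $\omega(x)$ together with isolation of the hyperbolic singularities to land in a single $W^s(\sigma)$).
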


\begin{proof}
For simplicity, we  denote $H^0=H\cap \mathcal{R}$\\

\renewcommand{\labelenumi}{\arabic{enumi}.}
\begin{enumerate}
\item [$(1)$] Let $x \in H^0\cap Dom(\Pi)$, so $X_{t(x) }(x)\in int(S_i)$ for some $S_i\in \mathcal{R}'$ and $t(x)>0$, due to continuous dependence of the flow, there is $\epsilon_x>0$, such that $\mathcal{O}^+(y) \cap int(S_i)\neq \emptyset$ for all $y\in B_{\epsilon_x}(x)$, then $B_{\epsilon_x}(x,\mathcal{R}) \subseteq Dom(\Pi)$, thereby $x \in int(Dom (\Pi))$ in $\mathcal{R}$. Define
$$U = \bigcup_{x\in H^0\cap Dom(\Pi)}
B_{\epsilon_x}(x,\mathcal{R})$$ we have that $U$ is a neighborhood of $H^0$ in
$\mathcal{R}$ such that $\Pi$ is $C^1$.

\item [$(2)$] Let $p \in H^0 \backslash Dom(\Pi)$, thus $\mathcal{O}^+(p) \cap int(\mathcal{R}) = \emptyset$.  Suppose there is a regular point             $r \in \omega(p) \subseteq H$,  then by the definition of sectional partition, there exists  $t_0 \in \mathbb{R}$ such that $X_{t_0}(r)\in int(\mathcal{R})$, thus $X_{t_0}(r) \in int(S_j)$ for some
$S_j \in \mathcal{R}'$. Given that $X_{t_0}(r)\in \omega(p)$ there is a sequence $t_n
\rightarrow \infty$ such that $X_{t_n}(p) \rightarrow X_{t_0}(r)$ and since
 $S_j$ is transversal to the flow we have to   $\mathcal{O}^+(p)\cap int(S_j)\neq \emptyset$  this is a contradiction.
 Thus  $\omega(p)$ is a singularity and $p \in \bigcup_{\sigma \in Sing(X)\cap H} W^s(\sigma)$.
\end{enumerate}
\end{proof}

\begin{lema}\label{lema3}
Given $q \in M$ if $\omega(q)$ is not a singularity and
$\mathcal{R}'$  is a sectional partition of  $\omega(q)$,   so we have that
 $\mathcal{O}^+(q)\cap int(\mathcal{R})=\{q_1,q_2,...\}$  is an infinite sequence ordered in  a way that
 $\Pi(q_n)=q_{n+1}$.\\
\end{lema}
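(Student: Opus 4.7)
The plan is to mimic the argument in part (2) of Lemma \ref{lema2} to produce a first element $q_1$, then iterate the return map $\Pi$ to generate the full sequence, using the assumption that $\omega(q)$ is not a singularity at every step to keep the iteration alive.

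For the base case, since $\omega(q)$ is not reduced to a singularity and is invariant, it contains at least one regular point $r$. By Definition \ref{defin3} applied to $H = \omega(q)$, a regular point cannot belong to $\{y \in \omega(q) : X_t(y) \notin int(\mathcal{R})\ \forall t\in\mathbb{R}\}$, so there exist $t_0 \in \mathbb{R}$ and $S_j \in \mathcal{R}'$ with $X_{t_0}(r) \in int(S_j)$. Invariance of $\omega(q)$ gives $X_{t_0}(r) \in \omega(q)$, so some sequence $t_n \to \infty$ satisfies $X_{t_n}(q) \to X_{t_0}(r)$. A flow-box/transversality argument (identical to the one used at the end of part (2) of Lemma \ref{lema2}) then supplies, for all large $n$, times $s_n$ close to $t_n$ with $X_{s_n}(q) \in int(S_j) \subseteq int(\mathcal{R})$. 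In particular $\mathcal{O}^+(q) \cap int(\mathcal{R}) \neq \emptyset$; if $q \notin int(\mathcal{R})$ let $\tau_1 > 0$ be the smallest positive time at which the orbit lies in $int(\mathcal{R})$ and set $q_1 = X_{\tau_1}(q)$, otherwise take $\tau_1 = 0$ and $q_1 = q$.

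For the inductive step, assume $q_n = X_{\tau_n}(q) \in int(\mathcal{R})$ has been defined. Since $\omega(q_n) = \omega(q)$ is still not a singularity, repeating the argument above with $q_n$ in place of $q$ yields some $t > 0$ with $X_t(q_n) \in int(\mathcal{R})$, hence $q_n \in Dom(\Pi)$ and $q_{n+1} := \Pi(q_n)$ is well-defined, with $q_{n+1} = X_{\tau_{n+1}}(q)$ for the first return time $\tau_{n+1} > \tau_n$. The nonzero-time condition of a sectional partition (there exists $\epsilon > 0$ with $\mathcal{R} \cap X_{[-\epsilon,\epsilon]}(y) = \{y\}$ for every $y \in \mathcal{R}$) forces $\tau_{n+1} - \tau_n \geq \epsilon$, so the times cannot accumulate at a finite value and $\tau_n \to \infty$; in particular the $q_n$ are pairwise distinct and infinite in number.

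The final task is to verify that $\{q_1, q_2, \ldots\}$ exhausts $\mathcal{O}^+(q) \cap int(\mathcal{R})$: if $y = X_s(q) \in int(\mathcal{R})$ with $s \geq \tau_1$, then either $s = \tau_n$ for some $n$ (giving $y = q_n$) or $\tau_n < s < \tau_{n+1}$ for some $n$, which contradicts the fact that $\tau_{n+1}$ is the \emph{first} return time after $\tau_n$. No serious obstacle arises; the only minor care is the initial bookkeeping distinguishing $q \in int(\mathcal{R})$ from $q \notin int(\mathcal{R})$, together with insisting that the transversality argument deliver \emph{infinitely} many returns rather than just one, which is the reason a whole sequence $t_n \to \infty$ in the $\omega$-limit definition is exploited rather than a single approximation.
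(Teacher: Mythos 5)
Your argument is correct and follows essentially the same route as the paper: both rest on the facts that $\omega(q)$ contains a regular orbit meeting $int(\mathcal{R})$ (by the definition of sectional partition) and that transversality of the sections forces $\mathcal{O}^+(q)$ to cross $int(\mathcal{R})$ near its accumulation points, after which the points are ordered by return time so that $\Pi(q_n)=q_{n+1}$. Your inductive formulation merely makes explicit some bookkeeping the paper leaves implicit (the nonzero-time condition separating return times and the first-return exhaustion argument), which is a welcome but not essentially different elaboration.
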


\begin{proof} Since the singularities are isolated and $\omega(q)$  is not a singularity, then
 $\omega(q)$ contains regular orbits and by the definition of a sectional partition, every regular orbit of
  $\omega(q)$ intersects  $int(\mathcal{R})$. Given $x\in
\omega(q)\cap int(\mathcal{R})$, we have  $x\in
int(S_j)$ for some $S_j \in \mathcal{R}'$. As $S_j$   is transverse to the flow and
$\mathcal{O}^+(q)$  accumulates  $x$  there is a sequence of points in  $\mathcal{O}^+(q)\cap int(S_j)$,  that accumulate   $x$.
Then $\mathcal{O}^+(q)\cap int(\mathcal{R})$   is an infinite set and since the cross sections in  $\mathcal{R}'$ are finite, disjoint and transverse to the flow, it follows that  $\mathcal{O}^+(p)\cap int(\mathcal{R})$     is a countable set, which  we order according to the time of return to the interior of $\mathcal{R}$.
\end{proof}

\section{Sectional partition and Sectional-hyperbolic sets}

Given $T_\Lambda M = \Df^s_\Lambda \oplus \Df^c_\Lambda$     the sectional decomposition of a sectional-hyperbolic set   $\Lambda$,
this can be extended to  $T_{U_\Lambda} M= \Df^s_{U_\Lambda} \oplus \Df^c_{U_\Lambda}$ where  $U_\Lambda$   is a neighborhood of $\Lambda$,
this extension is done continuously for  $\Df^c_U$ and integrable for $\Df^s_U$.\\

Let $\Sigma \subset U_\Lambda$ a cross section, we will denote by $\F^s_{\Sigma}$ the {\em vertical foliation} of $\Sigma$ obtained by the projection of $\F^{ss}$ over $\Sigma$ along the flow $X$, (i.e., $\F^s(x,\Sigma)$ is a leaf in $\Sigma$ obtained by the projection of the leaf $\F^{ss}(x)$ over $\Sigma$ along the flow $X$, for all $x \in \Sigma$).
  We also denote $\partial^v \Sigma$ and $\partial^h \Sigma$ the vertical and horizontal border of $\Sigma$ respectively.
  We  assume that the components of the vertical border  $\partial^v \Sigma$  are conformed by foliation leaves of $\F^s_\Sigma$, and $\partial^h \Sigma$
  is transversal to $\F^s_\Sigma$.\\

Given a sectional partition $\mathcal{R}'$, of a sectional-hyperbolic set $\Lambda$, such that $\mathcal{R} \subseteq U_\Lambda$, the foliation of $\mathcal{R}$, $\F^s_\mathcal{R}$, is determined by the foliation of the cross sections which makes contains.\\

\begin{teor}\label{teor6} Let $\omega(q)$   is a sectional-hyperbolic set of codimension  $1$, if $\omega(q)$   is not a singularity, then for all  $\alpha>0$  there is a sectional partition   $\mathcal{R}'$ of $\omega(q)$,  with diameter less than $\alpha$, such that:
\begin{enumerate}
\item[$(1)$] $int(\mathcal{R}) \cap \mathcal{O}^+(q)=\{q_1,q_2,...\}$ with $\Pi(q_{n-1})=q_n$,
\item[$(2)$] there is $\delta > 0$ and $N \in \mathbb{N}$ such that if $n \geq N$ then one of the following statements is true:\\

$(A)$ $B_\delta(q_n,\mathcal{R})\subseteq Dom(\Pi)$ and $\Pi |_{B_\delta(q_n,\mathcal{R})}$ is
$C^1$, or\\

$(B)$ $B_\delta^+(q_n,\mathcal{R})\subseteq Dom(\Pi)$ and $\Pi
|_{B_\delta^+(q_n,\mathcal{R})}$ is $C^1$,\\
where $B_\delta^+(q_n,\mathcal{R})$ denotes the connected component of $B_\delta(q_n,\mathcal{R}) \backslash s_n$
which contains $q_n$, and $s_n$ is a submanifold contained in the intersection of $\bigcup_{\sigma \in Sing(X)\cap \omega(q)}W^s(\sigma)$ with $B_\delta(q_n,\mathcal{R})$.
\end{enumerate}
\end{teor}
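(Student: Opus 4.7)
I would combine Theorem~\ref{teor5} to produce the partition, Lemma~\ref{lema3} to obtain~(1), and a dichotomy controlled by the distance from $q_n$ to $S := \bigcup_{\sigma\in Sing(X)\cap\omega(q)}W^s(\sigma)\cap\mathcal{R}$ to obtain~(2).

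\emph{Construction.} Apply Theorem~\ref{teor5} to $\omega(q)$ (which is not a singularity and whose singularities are all hyperbolic) to get a sectional partition of diameter less than $\alpha$. By shrinking the cross sections I assume that $\mathcal{R}\subseteq U_{\omega(q)}$ so that the sectional splitting $\Df^s\oplus\Df^c$ extends, that the vertical boundaries of every $S_i$ are unions of leaves of $\F^s_\mathcal{R}$, and that for each $\sigma\in Sing(X)\cap\omega(q)$ (which must be Lorenz-like, being accumulated by regular orbits of $\omega(q)$) the local cross sections are associated to $\sigma$ in the sense of the definition preceding Theorem~\ref{teor1}. Call the resulting partition $\mathcal{R}'$; it still has diameter less than $\alpha$. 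Part~(1) is then Lemma~\ref{lema3} applied directly.

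\emph{The two cases.} Set $H^0=\omega(q)\cap\mathcal{R}$. Lemma~\ref{lema2}(2) gives $H^0\setminus Dom(\Pi)\subseteq S$, and Lemma~\ref{lema2}(1) furnishes an open neighborhood $U$ of $H^0\cap Dom(\Pi)$ on which $\Pi$ is $C^1$. Fix $\delta_0>0$ small. For those $n$ with $d(q_n,S)\geq\delta_0$, the point $q_n$ lies in a compact subset of $H^0\cap Dom(\Pi)\subseteq U$, so a Lebesgue-number argument applied to the cover $\{B_{\epsilon_x}(x,\mathcal{R})\}_{x\in H^0\cap Dom(\Pi)}$ produced in the proof of Lemma~\ref{lema2}(1) yields a uniform $\delta_A$ for which~(A) holds. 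For those $n$ with $d(q_n,S)<\delta_0$, $q_n$ lies close to some $W^s(\sigma)$; I then define $s_n$ as the connected component of $W^s(\sigma)\cap B_\delta(q_n,\mathcal{R})$ passing through the point of $W^s(\sigma)$ closest to $q_n$. Because $\dim\Df^c=2$, the transverse intersection $W^s(\sigma)\cap\mathcal{R}$ is a codimension-one $C^1$ submanifold of $\mathcal{R}$, so $s_n$ separates $B_\delta(q_n,\mathcal{R})$ into two components. Since the cross section is associated to $\sigma$, points on the $q_n$-side of $s_n$ flow past $\sigma$ along $W^u(\sigma)$ and re-enter $int(\mathcal{R})$, giving $B^+_\delta(q_n,\mathcal{R})\subseteq Dom(\Pi)$; $C^1$-regularity of $\Pi$ on this half-ball follows from smoothness of the flow away from $\sigma$ and transversality to the target cross section.

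\emph{Main obstacle.} The delicate point is to choose a single pair $(\delta,N)$ that simultaneously serves both cases for every $n\geq N$, i.e.\ to show that the radius of the one-sided ball $B^+_\delta(q_n,\mathcal{R})$ does not collapse as $q_n$ approaches $s_n$. I expect to handle this by exploiting the codimension-one structure: up to the uniform contraction along the foliation $\F^s_\mathcal{R}$, the return map $\Pi$ factors as a one-dimensional expanding map on an arc transverse to $\F^s_\mathcal{R}$, with a one-sided singularity at the intersection with $s_n$. A uniform lower bound for the half-ball radius then comes from the uniform hyperbolic rates at the finitely many Lorenz-like singularities of $\omega(q)$, continuity of the invariant foliations on $U_{\omega(q)}$, and compactness of $\omega(q)$, so that only finitely many local Poincar\'e systems near the singularities need to be controlled.
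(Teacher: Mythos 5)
Your overall skeleton (Theorem~\ref{teor5} for the partition, Lemma~\ref{lema3} for~(1), Lemma~\ref{lema2} to locate the non-returning points inside $\bigcup W^s(\sigma)$, and a near/far dichotomy) matches the paper, but the step you yourself flag as the ``main obstacle'' is exactly where the proposal is not a proof, and the route you sketch for it is not the route that actually works. You propose to get the uniform pair $(\delta,N)$ from uniform hyperbolic rates at the singularities and a reduction to a one-dimensional expanding return map; the paper instead gets uniformity from pure compactness: it decomposes $\omega(q)\cap\mathcal{R}$ into four pieces ($A_1$ returning points; $A_2$ interior non-returning points; $A_3$ boundary non-returning points accumulated by $\mathcal{O}^+(q)$; $A_4$ the remaining boundary points), to each $x$ assigns a radius $\delta_x$ with the desired property of $\Pi$ on $B_{\delta_x}(x,\mathcal{R})$ or $B_{\delta_x}^+(x,\mathcal{R})$, extracts a finite subcover of $\omega(q)\cap\mathcal{R}$ by the half-radius balls, and takes $\delta$ to be the minimum of the eighths of those finitely many radii. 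The half-ball radius therefore does not ``collapse as $q_n\to s_n$'' because $q_n$ eventually falls inside one of these finitely many balls of fixed radius, not because of any quantitative hyperbolic estimate; you never need the expanding-map picture, and nothing in the paper's argument requires controlling return times or expansion rates near the singularities.

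There are two further points your dichotomy misses. First, you define $s_n$ as the component of $W^s(\sigma)\cap B_\delta(q_n,\mathcal{R})$ through the point of $W^s(\sigma)$ \emph{closest} to $q_n$; the paper instead takes $s_n = s_{y_j}\cap B_\delta(q_n,\mathcal{R})$ where $y_j\in\omega(q)\cap\mathcal{R}\cap W^s(\sigma)$ is a point of a fixed finite subcover, and identifies $s_{y_j}$ with the stable-foliation leaf $\F^s(y_j,S_j)$ via Lemma~\ref{lema1}. Your choice is not guaranteed to separate $B_\delta(q_n,\mathcal{R})$ when the relevant piece of $W^s(\sigma)$ lies in $\partial^v(S_j)$ — the paper handles this explicitly as the sub-case $z\in\partial^v(S_j)$ inside $A_3$, where $s_z\subseteq\partial^v(S_j)$ and $B_{\beta_z}(z,\mathcal{R})\setminus s_z$ accumulates on only one branch of $W^u(\sigma)$. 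Second, the assertion that ``points on the $q_n$-side of $s_n$ flow past $\sigma$ and re-enter $int(\mathcal{R})$'' needs the case analysis the paper carries out: $W^u(\sigma)\setminus\{\sigma\}$ has two branches $W^+,W^-$, and only the branches contained in $\omega(q)$ are guaranteed (by the definition of sectional partition) to meet $int(\mathcal{R})$. Whether both or only one branch lies in $\omega(q)$ determines whether you get the full punctured ball in $Dom(\Pi)$ or only a half-ball, and hence whether conclusion~(A) or~(B) can even be hoped for; your sketch treats this as automatic. In short: part~(1) and the Lemma~\ref{lema2}/$W^s(\sigma)$ reduction are fine, but the uniformity argument must be re-done as a compactness/covering argument, and the branch analysis near $W^u(\sigma)$ must be made explicit.
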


\begin{proof}
By  Theorem \ref{teor5} there exists a sectional partition $\mathcal{R}'$ of $\omega(q)$  of arbitrarily small diameter such that $R \subseteq U_{\omega(q)}$, and by  Lemma \ref{lema3} we have ($1$).\\

Now, by being an omega-limit sectional hyperbolic set, all singularities of $\omega(q)$ are Lorenz-like, and by  codimension $1$ we have that
 $ W^u(\sigma)$ is one-dimensional; therefore $W^s(\sigma)$   is a manifold of dimension $n-1$. On the other hand  $\mathcal{O}^+(q)\cap W^s(\sigma) = \emptyset$  for all singularities since $\omega(q)$   is not a singularity.\\

For simplicity we will denote:
\begin{align*}
A_1 & =\omega(q)\cap \mathcal{R} \cap Dom(\Pi)\\
A_2 & =\left( \omega(q) \cap int(\mathcal{R})\right)\setminus Dom(\Pi)\\
A_3 & = \left( \omega(q) \cap \partial(\mathcal{R}) \cap Cl \left[ \mathcal{O}^+(q) \cap int(\mathcal{R})\right] \right) \setminus Dom(\Pi)\\
A_4 &=\left( \omega(q) \cap \partial({\mathcal{R}})\right)\setminus \left( Dom(\Pi) \cap Cl \left[ \mathcal{O}^+(q)\cap int(\mathcal{R})\right] \right)
\end{align*}
Observe that $A_1\cup A_2 \cup A_3 \cup A_4=\omega(q)\cap \mathcal{R}$,  and besides these sets, they are pairwise disjoint. Next, to each point $x\in \omega(q) \cap \mathcal{R}$, we associate a $\delta_x>0$,   according to the set to which it belongs, as follows:

\begin{enumerate}[{\textbf{Case} 1}.]
\item If $x\in A_1$, then by the Lemma \ref{lema2} we choose $\delta_x$, such that

\begin{align} \label{ecu3}
B_{\delta_x}(x,\mathcal{R}) \subseteq Dom(\Pi) \text{ and } \Pi|_{B_{\delta_x}(x,\mathcal{R})} \text{ is } C^1
\end{align}
\end{enumerate}

For the cases  $A_2$ and $A_3$,  observe first that   $x \in \omega(q) \cap \mathcal{R} \setminus Dom(\Pi)$ implies $x \in S_j$ for some $S_j\in \mathcal{R}'$.
By  Lemma \ref{lema2}, there is $\sigma_x \in \omega(q)\cap Sing(x)$ such that $x\in W^s(\sigma_x)$. We have that $S_j$ and $W^s(\sigma_x)$ are manifolds of dimension $n-1$ and $x\in W^s(\sigma_x)\cap S_j$, in addition $W^s(\sigma_x)$ is invariant and $S_j$ is  transversal to the flow, so the connected component of  $S_j \cap W^s(\sigma_x)$ containing $x$ is a submanifold of dimension $n-2$ which we denote  $s_x$.   On the other hand, $\F^s(x)$ is a invariant manifold of dimension $n-1$, then $\F^s(x,S_j)$ is of dimension $n-2$ and we have $\F^s(x) \subseteq W^s(\sigma_x)$ and $x\in W^s(\sigma_x)$, then by Lemma   \ref{lema1}, $\F^s(x,S_j)=s_x$.\\

 Now, as $W^u(\sigma_x)$  is one-dimensional, then $W^u(\sigma_x)\backslash \{\sigma_x\}$ is divided into two connected components $W^+$ y $W^-$ and as $\mathcal{O}^+(q)\cap W^s(\sigma_x) = \emptyset$, $\mathcal{O}^+(q)$  must accumulate at least one connected component. Therefore, we have one of the following statements:
\begin{enumerate}
\item[(a)] $W^+\subseteq \omega(q)$ and $W^-\subseteq \omega(q)$;
\item[(b)] $W^+\subseteq \omega(q)$ and $W^-\nsubseteq \omega(q)$;
\item[(c)] $W^+ \nsubseteq \omega(q)$ and $W^-\subseteq \omega(q)$.
\end{enumerate}
We consider  $\beta_x$ small, such that $\mathcal{O}^+(y)$ accumulates $W^u(\sigma_x)$ for all $y \in B_{\beta_x}(x,\mathcal{R})\backslash s_x$.\\

\begin{enumerate}[{\textbf{Case} 1.}]
\addtocounter{enumi}{1}

\item If $y \in A_2$ then $y \in int(S_j)$. If the statement (a) occurs, then $W^+\subseteq \omega(q)$ and $W^-\subseteq \omega(q)$. Since $W^+$ and $W^-$  are regular orbits, by the definition of sectional partition we have  $W^+\cap int(\mathcal{R})\neq \emptyset$ and $W^-\cap int(\mathcal{R})\neq \emptyset$. Then there are $S_i , S_k\in \mathcal{R}'$ such that $W^+\cap int(S_i)\neq \emptyset$ and $W^- \cap int(S_k)\neq \emptyset$. Using the continuous dependency of the flow, there is $\delta_y<\beta_y$ small enough, so that $\mathcal{O}^+(p)  \cap int(S_i) \neq \emptyset$ or $\mathcal{O}^+(p) \cap int(S_k) \neq \emptyset$ thus $ p \in B_{\delta_y}(y,\mathcal{R})\setminus s_y$. Therefore
\begin{align}\label{ecu1}
B_{\delta_y}(y,\mathcal{R})\backslash s_y \subseteq Dom(\Pi)\,\text{  and  } \,\,\,\Pi|_{B_{\delta_y}(y,\mathcal{R}) \setminus_{s_y}} \text{  es } C^1.
\end{align}
If the statement (b) occurs, we have $W^+\subseteq \omega(q)$ and
$W^-\nsubseteq \omega(q)$. Then, there is
$S_i\in \mathcal{R}'$ such that $W^+\cap int(S_i)\neq \emptyset$. Also
$\mathcal{O}^+(q)$ does not accumulate on $W^-$.  For every $\gamma$, $s_y$, divide  $B_{\gamma}(y,\mathcal{R})$ into two connected components, we call $B_{\gamma}^+(y,\mathcal{R})$ the component that accumulates on  $W^+$ y $B_{\gamma}^-(y,\mathcal{R})$ the other; we take  $\delta_y<\beta_y$ small enough, so that $\mathcal{O}^+(q)$ does not intersect $B_{\delta_y}^-(y,\mathcal{R})$ and $\mathcal{O}^+(p)\cap int(S_i)\neq \emptyset$ for all $p\in B_{\delta_y}^+(y,\mathcal{R})$. Therefore
\begin{align}\label{ecu2}
B_{\delta_y}^+(y,\mathcal{R})\backslash s_y\subseteq Dom(\Pi)\text{ , }
\,\,\,\Pi|_{B_{\delta_y}^+(y,\mathcal{R}) \setminus s_y} \text{  is } C^1.
\end{align}
 If the statement (c) occurs, we  consider
$B_{\delta_y}^+(y,\mathcal{R})$ as the component that approaches $W^-$  and the result follows similarly to when we have (b).\\

\item If $z\in A_3$ then $z \in \partial(S_j)$. If $z \in \partial^v(S_j)$ then $s_z \subseteq \partial^v(S_j)$  and consequently $B_{\beta_z}(z,\mathcal{R})\setminus s_z$ accumulates only on one of the components $W^+$ or $W^-$. Without loss of generality we can assume that it accumulates on $W^+$. Let $S_i \in \mathcal{R}$, such that $W^+$ intersects the interior of  $S_i$, we choose $\delta_z <\beta_z$ small enough, so that $\mathcal{O}^+(p) \cap (int(S_i)) \neq \emptyset$, for all $p\in B_{\delta_z}(z,\mathcal{R}) \setminus s_z$. Then we obtain
\begin{align}
B_{\delta_z}(z,\mathcal{R})\backslash s_z \subseteq Dom(\Pi)\,\text{  and } \,\,\,\Pi|_{B_{\delta_z}(z,\mathcal{R}) \setminus_{s_z}} \text{  is } C^1.
\end{align}
Now if $z \in \partial^h(S_j)$, then $s_z$ divides $B_{\beta_z}(z,\mathcal{R})$ into two connected components, and reasoning in the same way as in the case of $A_2$, we obtain that

\begin{align}
B_{\delta_z}(z,\mathcal{R})\backslash s_z \subseteq Dom(\Pi)\,\text{  and  } \,\,\,\Pi|_{B_{\delta_z}(z,\mathcal{R}) \setminus_{s_z}} \text{  es } C^1.
\end{align}
or
\begin{align}
B_{\delta_z}^+(z,\mathcal{R})\backslash s_z\subseteq Dom(\Pi)\text{ , }
\,\,\,\Pi|_{B_{\delta_z}^+(z,\mathcal{R}) \setminus s_z} \text{  is } C^1.
\end{align}\\

\item If $w \in A_4$,  then $w \in \partial(S_j)$ for some $S_j\in \mathcal{R}'$,  we can choose $\delta_w< \frac{diam(S_j)}{2}$ such that $B_{\delta_w}(w,\mathcal{R}) \cap \left[\mathcal{O}^+(q)\cap int (\mathcal{R})\right] =\emptyset$.
\end{enumerate}
\vspace{0.5cm}

Note that $ \omega(q)\cap \mathcal{R}\backslash Dom(\Pi)$ is contained in
$$\left( \bigcup_{y_j\in A_2} B_{\frac{\delta_{y_j}}{2}}(y_j,\mathcal{R}) \right) \cup \left( \bigcup_{z_k\in A_3} B_{\frac{\delta_{z_k}}{2}}(z_k,\mathcal{R}) \right)\cup \left( \bigcup_{w_m\in A_4} B_{\frac{\delta_{w_m}}{2}}(w_m,\mathcal{R}) \right),$$
and since $\omega(q)\cap \mathcal{R} \backslash Dom(\Pi)$ is compact, then is contained in
$$\left( \bigcup_{k=1}^{l_2} B_{\frac{\delta_{y_k}}{2}}(y_k,\mathcal{R}) \right) \cup \left( \bigcup_{j=1}^{l_3} B_{\frac{\delta_{z_j}}{2}}(z_j,\mathcal{R}) \right) \cup \left( \bigcup_{m=1}^{l_4} B_{\frac{\delta_{w_m}}{2}}(w_m,\mathcal{R}) \right).$$
We define
$$ B_2  =\bigcup_{j=1}^{l_1} B_{\frac{\delta_{y_j}}{2}}(y_j,\mathcal{R}), \,\,\, B_3  = \bigcup_{k=1}^{l_3} B_{\frac{\delta_{z_k}}{2}}(z_k,\mathcal{R}), \,\,\,
B_4  = \bigcup_{m=1}^{l_4} B_{\frac{\delta_{w_m}}{2}}(w_m,\mathcal{R})$$
and
$$H  =\omega(q)\cap \mathcal{R} \backslash (B_2 \cup B_3 \cup B_4)$$
Observe that $H \subseteq \omega(q)\cap \mathcal{R} \cap Dom(\Pi)$. Thus
$$H \subseteq \bigcup_{x_i\in A_1}B_{\frac{\delta_{x_i}}{2}}(x_i,\mathcal{R}),$$
Since $H$ is compact we obtain
$$H\subseteq \bigcup_{i=1}^{l_1} B_{\frac{\beta_{x_i}}{2}}(x_i,\mathcal{R})= B_1.$$
Also, as $\omega(q)\cap \mathcal{R} \subseteq H\cup
(\omega(q)\cap \mathcal{R} \backslash Dom(\Pi))$ then $(B_1 \cup B_2\cup B_3 \cup B_4)$ is a open covering of $\omega(q)\cap \mathcal{R}$. \\

Now, as $\mathcal{O}^+(q) \cap int(\mathcal{R}) = \{q_1, q_2, ...\}$, it follows that $\{q_n\}_{n\in \mathbb{N}}$ accumulates on $\omega(q)\cap \mathcal{R}$, so there exists a large enough  $N\in\mathbb{N}$ such that for all $n>N$, we obtain $q_n \in B_1 \cup B_2 \cup B_3$, we exclude $B_4$ by the way we define
 $A_4$ and the $B_{\delta_w}(w,\mathcal{R})$.\\

Take $\delta = min \left\lbrace
\dfrac{\delta_{x_i}}{8},\dfrac{\delta_{y_j}}{8},\dfrac{\delta_{z_k}}{8} : 1\leq i \leq l_1,
1\leq j \leq l_2, 1 \leq k \leq l_3  \right\rbrace$, we have  three possibilities: $B_\delta(q_n,\mathcal{R})\subseteq B_{\delta_{x_i}}(x_i,\mathcal{R})$, $B_\delta(q_n,\mathcal{R})\subseteq B_{\delta_{y_j}}(y_j,\mathcal{R})$ or $B_\delta(q_n,\mathcal{R})\subseteq B_{\delta_{z_k}}(z_k,\mathcal{R})$.\\

If $B_\delta(q_n,\mathcal{R})\subseteq
B_{\delta_{x_i}}(x_i,\mathcal{R})$ then by \ref{ecu3}, we have to
$B_\delta(q_n,\mathcal{R})\subseteq Dom(\Pi)$ y $\Pi|_{B_\delta(q_n,\mathcal{R})}$
is $C^1$. In this case we obtain $(A)$.\\

If $B_\delta(q_n,\mathcal{R})\subseteq B_{\delta_{y_j}}(y_j,\mathcal{R})$, define
$s_n = s_{y_j}\cap B_\delta(q_n,\mathcal{R})$, $q_n \notin s_n$ because otherwise $\omega(q)$ would be a singularity. Therefore we have   $q_n \in B_\delta(q_n,\mathcal{R})\backslash s_n$. We definite $B_\delta^+(q_n,\mathcal{R})$ as the connected component of $B_\delta(q_n,\mathcal{R})\backslash s_n$ which contains $q_n$. Here we have two subcases depending on whether we have \ref{ecu1} or \ref{ecu2}.\\

If  \ref{ecu1} occurs, $B_\delta^+(q_n,\mathcal{R})\backslash s_n \subseteq
B_{\delta_{y_j}}(y_j,\mathcal{R})\backslash s_{y_j}$, therefore $B_\delta^+(q_n,\mathcal{R})\subseteq Dom(\Pi)$, and $\Pi|_{B_\delta^+(q_n,\mathcal{R})}$ is $C^1$.\\

If \ref{ecu2} is satisfied, since $q_n \in \mathcal{O}^+(q)$, then $q_n \in B_{\delta_{y_j}}^+(y_j,\mathcal{R})$, from where $B_\delta^+(q_n,\mathcal{R})\subseteq
B_{\delta_{y_j}}^+(y_j,\mathcal{R})$.
Thus
$B_\delta^+(q_n,\mathcal{R})\setminus s_n \subseteq Dom(\Pi)$ y $\Pi|_{B_\delta^+(q_n,\mathcal{R})\setminus s_n}$ is $C^1$. Then for both subcases we get $(B)$.\\

If $B_\delta(q_n,\mathcal{R})\subseteq B_{\delta_{z_k}}(z_k,\mathcal{R})$, analogously to the previous case we obtain $(B)$.\\

Then, for all $n\geq N$ we have $(A)$ or $(B)$ which proves the theorem.
\end{proof}

\section{Characterizing omega-limit sets which are closed orbits in codimension one}

\begin{defin}\label{defin4}
A point $q\in M$ satisfies the property $P_{(\Sigma)}$ if there exists an interval $I\subseteq M$ with $q \in \partial I$ and a closed set $\Sigma \subseteq M$ such that:
\begin{enumerate}
\item $Cl(\mathcal{O}^+(q)\cap \Sigma) = \emptyset$,
\item $\mathcal{O}^+(p) \cap \Sigma \neq \emptyset$ for all $p\in I$.
\end{enumerate}
\end{defin}

\begin{lema}\label{lema4}
Let $q \in M$ a point satisfying the property  $(P)_\Sigma$  for
some closed subset $\Sigma$ with $\omega(q)$ a sectional-hyperbolic set
of codimension $1$. If $\omega(q)$ is not a singularity, then there is a
sectional partition   $\mathcal{R}'$ of $\omega(q)$, $\delta>0$, $S\in \mathcal{R}'$, a sequence $\{\widehat{q}_n\}_{n\in \Dn}$ of points in $int(S) \cap \mathcal{O}^+(q)$ and a sequence of intervals $\widehat{J}_1,\widehat{J}_2,...\subseteq
S$ in the positive orbit of $I$ with $\widehat{q_i} \in
\partial(\widehat{J_i})$ and $l(\widehat{J_i})\geq \delta$ for all $i$.
\end{lema}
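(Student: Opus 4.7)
The plan is to apply Theorem \ref{teor6} to $\omega(q)$ and then transport the interval $I$ forward along the flow onto a fixed cross section of the resulting sectional partition, using property $(P)_\Sigma$ and the sectional-hyperbolic expansion to control the lengths.

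First, since $\omega(q)$ is sectional-hyperbolic of codimension $1$ and is not a singularity, Theorem \ref{teor6} furnishes a sectional partition $\mathcal{R}'$ of $\omega(q)$ with arbitrarily small diameter, the enumeration $\mathcal{O}^+(q)\cap int(\mathcal{R}) = \{q_1,q_2,\dots\}$ with $\Pi(q_{n-1})=q_n$, a uniform $\delta > 0$, and an index $N\in\mathbb{N}$ such that for each $n\geq N$ one of cases $(A)$ or $(B)$ holds. Since $\mathcal{R}'$ is finite, the pigeonhole principle produces some $S\in\mathcal{R}'$ containing infinitely many $q_n$ with $n\geq N$; relabel this subsequence as $\widehat{q}_i = q_{n_i}\in int(S)$.

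Next, let $\tau_i>0$ be such that $X_{\tau_i}(q)=\widehat{q}_i$. Because $q\in\partial I$ and the time-$\tau_i$ map of the flow is a local diffeomorphism, there is a sub-interval $I_i\subseteq I$ with $q\in\partial I_i$ whose image $X_{\tau_i}(I_i)$ can be flow-projected onto $S$ by associating to each $p\in I_i$ a small time $t(p)$ with $X_{\tau_i+t(p)}(p)\in S$. The resulting arc
\[
\widehat{J}_i \;=\; \{\,X_{\tau_i+t(p)}(p)\,:\,p\in I_i\,\}\subseteq S
\]
lies in the positive orbit of $I$ and satisfies $\widehat{q}_i\in\partial\widehat{J}_i$. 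In case $(B)$, the fact that $q$'s orbit avoids $\bigcup_{\sigma}W^s(\sigma)$ (guaranteed since $\omega(q)$ is not a singularity) together with continuity lets me shrink $I_i$ so that $\widehat{J}_i\subseteq B_\delta^+(\widehat{q}_i,\mathcal{R})$.

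The main obstacle, and the hardest step, is to secure the uniform lower bound $l(\widehat{J}_i)\geq\delta$. Here I will exploit the sectional-hyperbolic structure: on the cross section $S$, the vertical foliation $\mathcal{F}^s_S$ is uniformly contracted by $\Pi$, and codimension $1$ makes the quotient $S/\mathcal{F}^s_S$ one-dimensional with a uniformly expanding induced action of $\Pi$. I will use property $(P)_\Sigma$ to rule out the degenerate case that $\widehat{J}_i$ projects to a single leaf of $\mathcal{F}^s_S$: if it did, then for every $p\in I_i$ the orbit $\mathcal{O}^+(p)$ would be strong-stable equivalent to $\mathcal{O}^+(q)$, forcing it to share the asymptotic avoidance of $\Sigma$ imposed by item $(1)$ and contradicting item $(2)$. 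Hence each $\widehat{J}_i$ has a nonzero transverse component that grows exponentially under iteration of $\Pi$; once the transverse length exceeds $\delta$, I truncate $\widehat{J}_i$ to a sub-arc of length exactly $\delta$ anchored at $\widehat{q}_i$ and respecting the local $(A)$ or $(B)$ domain from Theorem \ref{teor6}. Dropping the finitely many pre-saturation indices and relabeling the tail gives the required sequences $\{\widehat{q}_i\}$ and $\{\widehat{J}_i\}$.
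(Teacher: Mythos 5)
Your proposal follows the paper's Theorem~\ref{teor6} setup and the intent is similar, but the crucial step --- the uniform lower bound $l(\widehat{J_i})\geq\delta$ --- is obtained by a mechanism the paper does not use and which has a real gap. You fix a subsequence $\widehat{q}_i$ in a single section $S$ first (by pigeonhole), define $\widehat{J}_i$ directly as the flow-image of a sub-interval $I_i\subseteq I$, and then try to show each has length at least $\delta$ by appealing to uniform transverse expansion of the Poincar\'e map and ruling out degeneracy via property $(P)_\Sigma$. The trouble is that nothing in the paper (nor in the hypotheses) gives you uniform one-step transverse expansion of $\Pi$ on $S/\mathcal{F}^s_S$; near the Lorenz-like singularities the return map is only \emph{eventually} expanding, and in any case your arcs $\widehat{J}_i$ are not $\Pi$-iterates of each other --- they are images of independently shrunken sub-intervals of $I$, so ``grows exponentially under iteration of $\Pi$'' does not apply to them. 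The truncation step (``once the transverse length exceeds $\delta$, truncate to length exactly $\delta$'') also presupposes the length does reach $\delta$ at index $i$, which is exactly what needs proving, and after truncation there is no reason the next arc in your fixed subsequence should again reach $\delta$.

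The paper's argument gets the $\delta$ bound topologically, not metrically, and the order of operations is reversed. One flows $I_1$ by successive applications of $\Pi$ while the image stays inside $B_\delta(q_n,\mathcal{R})$ (or $B_\delta^+$ in case $(B)$). Because the sectional partition sits in a compact neighborhood $W$ of $\omega(q)$ with $W\cap\Sigma=\emptyset$ while property $(P)_\Sigma$ forces the positive orbit of every point of $I$ to hit $\Sigma$, the iterate \emph{must} eventually escape the $\delta$-ball; at the \emph{first} escape index $i_1$, the connected component of $I_{i_1}\cap B_\delta(q_{i_1},\mathcal{R})$ bounded by $q_{i_1}$ and a point of $\partial B_\delta$ automatically has length at least $\delta$. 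To see that $s_{i_1}$ does not clip this component, one notes $s_{i_1}\subseteq\bigcup_\sigma W^s(\sigma)$ and a point in $W^s(\sigma)$ would have forward orbit converging to $\sigma$ and hence never meeting $\Sigma$, contradicting $(P)_\Sigma$ --- this is the correct use of the asymptotic-avoidance idea you gesture at, applied to stable manifolds of singularities rather than to the strong stable foliation of $\mathcal{O}^+(q)$. Then one shrinks $I_{i_1}$, restarts the escape argument, produces an increasing sequence of escape indices $i_m$ with $l(J_{i_m})\geq\delta$, and only \emph{afterwards} applies pigeonhole to extract a subsequence landing in a single $S_r$. You should replace your expansion/truncation argument by this escape argument; as written, the length bound is unproved.
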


\begin{proof}
Without loss of generality we can assume that $q\in U_{\omega(q)}$ and the arc  $I$ that refers to property  $(P)_\Sigma$, is tangent to $\Df^c$ and transverse to the flow. Since  $Cl(\mathcal{O}^+(q))$ and $\Sigma$  are disjoint,
there exists a compact neighborhood $W\subseteq U_{\omega(q)}$ of $\omega(q)$
such that $W\cap \Sigma = \emptyset$ and $\mathcal{O}^+(q)\subseteq W$, then by the theorem  \ref{teor6}, we have a sectional partition $\mathcal{R}'=\{S_1, S_2, ... , S_k\}$ of $\omega(q)$ contained in $int(W)$ (since we can take the diameter of  $\mathcal{R}$ arbitrarily small), and $N\in \Dn$ such that $\mathcal{O}^+(q)\cap int(\mathcal{R})=\{q_1,q_2,...\}$, and for all $n \geq N$, $q_n$ satisfies  $(A)$ or $(B)$. We will assume $N=1$.\\

For all $n$ there is  $S_{j_n}\in \mathcal{R}'$, such that $q_n \in int(S_{j_n})$. As $q \in \partial(I)$, for the continuous dependence we have to $q_n$  must be a border point of the positive orbit of   $I$, and since $S_{j_n}$ is transverse to the flow as well as $I$, we can guarantee that exists $I_1$ in the positive orbit of   $I$ such that:

$$I_1\subseteq S_{j_1}\cap Dom(\Pi) \,\,\, \text{ and }\,\,\,
q_1\in \partial(I_1).$$

If necessary we can reduce it to  $I$ so that $I_1\subseteq
int(B_\delta(q_1,\mathcal{R}))$ or $I_1\subseteq int(B_\delta^+(q_1,\mathcal{R}))$   depending on
whether  occurs  $(A)$ or $(B)$ for  $q_1$; we define $I_i = \Pi(I_{i-1})
= \Pi^i(I_1)$ for $i>1$ and while  $I_{i-1}\subseteq
B_\delta(q_{i-1},\mathcal{R})$ or $I_{i-1}\subseteq B_\delta^+(q_{i-1},\mathcal{R})$ again depending on whether occurs $(A)$ or $(B)$ for $q_{i-1}$. Since $W \cap \Sigma = \emptyset$ and the positive orbit of $I$ intersects $\Sigma$, there exists a first index  $i_1$ such that:
$$I_{i_1} \nsubseteq B_\delta(q_{i_1},\mathcal{R})\,\,\, \text{ or } \,\,\, I_{i_1}
\nsubseteq B_\delta^+(q_{i_1},\mathcal{R}).$$
We define $J_{i_1} \subseteq I_{i_1}$ as the connected component of
$I_{i_1} \cap B_\delta(q_{i_1},\mathcal{R})$   (or $I_{i_1} \cap B_\delta^+(q_{i_1},\mathcal{R})$) which bounded with $q_{i_1}$, and some point
in  $\partial (B_\delta(q_{i_1} \mathcal{R}))$ (or in $\partial(B_\delta^+(q_{i_1},\mathcal{R}))$. Remember that
$s_{i_1}\subseteq \bigcup_{\sigma \in Sing(X)\cap \omega(q)}W^s(\sigma)$ and
$\mathcal{O}^+(I_{i_1})\cap \Sigma \neq \emptyset$ we have  $I_{i_1}\cap s_{i_1} = \emptyset$ and we can conclude that
$l(J_{i_1}) \geq \delta$.\\

Now, if necessary, we reduce  $I_{i_1}$ so that
$\Pi(I_{i_1}) \subseteq B_\delta(q_{i_1+1},\mathcal{R})$  (or $ \Pi(I_{i_1})
\subseteq B_\delta^+(q_{i_1+1},\mathcal{R})$) and repeat the argument used in
$I_1$ for $I_{i_1}$  and in this way find an index $i_2$
and construct the interval  $J_{i_2} \subseteq I_{i_2}$ satisfies same conditions of $J_{i_1}$, then we would reduce if necessary to $I_{i_2}$   to repeat the process and so on, then we get a sequence  $\{J_{i_m}\}_{m\in \mathbb{N}}$ such that $J_{i_m}\subseteq S_{j_{i_m}}$, $q_{i_m}\in
\partial (J_{i_m})$ y $l(J_{i_m})\geq
\delta$ for all $m$.\\

Since $\mathcal{R}$  is a finite collection we have that this sequence has a subsequence $\{J_{i_{m_s}}\}_{s\in \mathbb{N}}$ such that
$J_{i_{m_s}}\subseteq S_r$ for some $S_r \in \mathcal{R}'$, considering $S=S_r$, $\widehat{J}_s=J_{i_{m_s}} $ and
$\widehat{q}_s=q_{i_{m_s}}$ the result is obtained.
\end{proof}

\begin{teor}\label{teor7}
Let $q \in M$ be a point satisfying the property $(P)_\Sigma$ for
some subset $\Sigma$ closed and $\omega(q)$ is a  sectional hyperbolic codimension  $1$ set. If $\omega(q)$ is not a singularity, then  $\omega(q)$ is a periodic orbit.
\end{teor}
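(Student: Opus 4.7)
The plan is to apply Lemma~\ref{lema4} to extract a family of uniformly long $\Df^c$-transverse intervals $\hat{J}_n\subseteq S$ with endpoints in $\mathcal{O}^+(q)$, pass to a Hausdorff limit $J^*\subseteq S$, prove $J^*\subseteq\omega(q)$, and then use the codimension-one sectional-hyperbolic structure together with the invariance of $\omega(q)$ to force $\omega(q)$ to be a single periodic orbit.

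First I would invoke Lemma~\ref{lema4}, using that $\omega(q)$ is sectional-hyperbolic of codimension one and is not a singularity. This furnishes a sectional partition $\mathcal{R}'$ of $\omega(q)$, a constant $\delta>0$, a cross section $S\in\mathcal{R}'$, a sequence $\{\hat{q}_n\}\subseteq int(S)\cap\mathcal{O}^+(q)$, and intervals $\hat{J}_n\subseteq S$ in the forward orbit of the arc $I$ from the definition of $(P)_\Sigma$, with $\hat{q}_n\in\partial\hat{J}_n$ and $l(\hat{J}_n)\geq\delta$. Since in the proof of Lemma~\ref{lema4} the arc $I$ is chosen tangent to $\Df^c$ and transverse to the flow, each $\hat{J}_n$ is a $\Df^c$-curve in $S$, hence transverse to the stable foliation $\F^s_S$. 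Compactness of $S$ then allows me to pass to a subsequence so that $\hat{q}_n\to p^*$ and $\hat{J}_n\to J^*$ in Hausdorff distance, with $p^*\in\omega(q)\cap S$, $p^*\in\partial J^*$, $l(J^*)\geq\delta$, and $J^*$ transverse to $\F^s_S$.

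The central step is to prove $J^*\subseteq\omega(q)$. Each $\hat{J}_n$ is, up to truncation, $\Pi^{k_n}(I^{(n)})$ for a sub-interval $I^{(n)}\subseteq I$ with $q$ as an endpoint and $k_n\to\infty$; the uniform $\Df^c$-expansion of the Poincar\'{e} return map $\Pi$ inside $S$, together with the uniform bound $l(\hat{J}_n)\leq\mathrm{diam}(S)$, forces $l(I^{(n)})$ to decay exponentially in $k_n$, so $I^{(n)}\to\{q\}$. Given $y\in J^*$, approximate it by $y_n=X_{T_n}(p_n)\in\hat{J}_n$ with $p_n\in I^{(n)}$ and $T_n\to\infty$; a shadowing argument, using the $\F^{ss}$-contraction along orbits and Lemma~\ref{lema1} to rule out escape through stable manifolds of the singularities of $\omega(q)$ (an escape which the sets $s_n$ from Theorem~\ref{teor6} already exclude for the $\hat{J}_n$), lets me pair the return times $T_n$ of the near-$q$ orbits with genuine return times of $\mathcal{O}^+(q)$ to $S$ converging to $y$, whence $y\in\omega(q)$. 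With $J^*\subseteq\omega(q)\cap S$ a $\Df^c$-curve of length $\geq\delta$, invariance of $\omega(q)$ under $\Pi$ together with the $\Df^c$-expansion forces $p^*$ to be a periodic point of some iterate of $\Pi$, producing a periodic orbit $\gamma\subseteq\omega(q)$; the uniform transversal thickness of $\hat{J}_n$ combined with $\hat{q}_n\to p^*$ then yields $\omega(q)=\mathcal{O}(\gamma)$.

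The hard part will be the inclusion $J^*\subseteq\omega(q)$: because $T_n\to\infty$ and $\Pi$ expands in $\Df^c$ at every return, naive continuity of the flow is not available, so one must carefully trade the $\Df^c$-expansion against the matching $\F^{ss}$-contraction, which is precisely where the codimension-one hypothesis enters in an essential way. Once that inclusion is secured, the final identification $\omega(q)=\mathcal{O}(\gamma)$ is comparatively routine, resting on the hyperbolic lemma applied to the regular part of $\omega(q)$.
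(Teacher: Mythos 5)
Your reduction to Lemma~\ref{lema4} matches the paper, but after that point the argument has a genuine gap: the central claim $J^*\subseteq\omega(q)$ is false in general, and in fact it fails precisely in the situation the theorem concludes. If $\omega(q)$ is a hyperbolic periodic orbit $\gamma$ and $I$ is a transverse arc attached to $q\in W^s(\gamma)$, the Inclination Lemma says that the forward images of $I$ accumulate on $W^u(\gamma)$, so the Hausdorff limit $J^*$ of the intervals $\widehat{J}_n$ is an arc inside a strong unstable manifold of a point of $\gamma$, meeting $\omega(q)$ only at the limit point $p^*$ of the $\widehat{q}_n$. More generally, points $X_{T_n}(p_n)$ with $p_n\to q$ and $T_n\to\infty$ accumulate on the prolongational limit set of $q$, not on $\omega(q)$; no shadowing or $\F^{ss}$-contraction argument can repair this, because the points of $\widehat{J}_n$ other than $\widehat{q}_n$ lie on orbits of points of $I\setminus\{q\}$, which by property $(P)_\Sigma$ eventually leave the neighborhood $W$ of $\omega(q)$ through $\Sigma$. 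The subsequent step is also unjustified: having a $\Df^c$-curve of definite length inside $\omega(q)$ plus expansion would not by itself force $p^*$ to be periodic (a transitive sectional-hyperbolic set such as a geometric Lorenz attractor is an $\omega$-limit set carrying expansion without being a closed orbit); what excludes that scenario is $(P)_\Sigma$, which your sketch stops using after Lemma~\ref{lema4}.

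The paper's proof exploits $(P)_\Sigma$ at exactly this stage, in the opposite direction: if the returns $\widehat{q}_i$ accumulate on some $x\in\omega(q)\cap S$, then, since each $\widehat{J}_i$ is tangent to $\Df^c$, has length at least $\delta$, and makes an angle with $\F^s_S$ bounded away from zero, either all but finitely many $\widehat{q}_i$ lie on the single leaf $\F^s(x,S)$, or some $\widehat{J}_r$ crosses a leaf $\F^s(\widehat{q}_j,S)$ at a point $z$. The latter is impossible: $z\in\mathcal{O}^+(I)$ forces $\mathcal{O}^+(z)\cap\Sigma\neq\emptyset$, while $z\in\F^s(\widehat{q}_j,S)$ makes $\mathcal{O}^+(z)$ asymptotic to $\mathcal{O}^+(q)\subseteq W$ with $W\cap\Sigma=\emptyset$. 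Hence infinitely many returns of $\mathcal{O}^+(q)$ lie on one stable leaf, and Lemma~11 of \cite{s} then produces a periodic point $p$ with these returns in $\F^s(p)$, giving $\omega(q)=\mathcal{O}(p)$. To fix your proof you would need to replace the limit-curve construction by this leaf-crossing dichotomy (or an equivalent use of $(P)_\Sigma$), since that is where the hypothesis actually does its work.
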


\begin{proof}
Let $W$ and $\mathcal{R}'$ as in the proof of the Lemma    \ref{lema4}, since $\omega(q)$   is not a singularity, then  there is $S \in \mathcal{R}'$,
$\widehat{q_i}$, $\widehat{J_i}$ and $\delta>0$ such that
$\widehat{q_i}\in int(S) \cap \mathcal{O}^+(q)$, $\widehat{J_i} \subseteq
\mathcal{O}^+(I) \cap S$, $\widehat{q_i}\in \partial \widehat{J_i}$, and $l(\widehat{J_i})\geq \delta$ for all
$i\in \mathbb{N}$, also, suppose there is $x \in \omega(q) \cap S$, such that $\widehat{q_i}$ accumulates on $x$.\\

If $x \in \partial^v(S)$, then $\widehat{q_i} \notin \mathcal{F}^s(x,S) \text{ for all } i$, since $ \mathcal{F}^s(x,S) \in \partial(S)$. Since $\widehat{j_i}$ is tangent a $\Df^c_U$ and transverse to $X$
then the angle between the arc $\widehat{j_i}$ and $\mathcal{F}^s_\Sigma$
is bounded away from zero for all  $i$, also as
$l(\widehat{J_i}) \geq \delta$ and $\widehat{q_i}\to x$ there will eventually
 $z$ such that:
$$z \in \widehat{J_r}\cap \mathcal{F}^s(\widehat{q_j},S)$$
for some pair $r, k  \in \mathbb{N}$. Since $z\in \widehat{J_r} \subseteq \mathcal{O}^+(I)$, then $\mathcal{O}^+(z)\cap \Sigma \neq \emptyset$, on the other hand $z \in \mathcal{F}^s(\widehat{q_j},S)$ so
$\mathcal{O}^+(z)$ is asymptotic at $\mathcal{O}^+(q)$ and as $W$ is compact, then
 $\mathcal{O}^+(z) \subseteq W$ so $\mathcal{O}^+(z) \cap \Sigma = \emptyset$; which is a contradiction, therefore $x\notin \partial^v(S)$.\\

Now, if $x \in \partial^h(S)$ or $x\in int(S)$, we have that $\{\widehat{q_1}, \widehat{q_2},...\} \setminus \F^s(x,S)$ has a finite number of elements, otherwise we would find again that there is $z \in \widehat{J_r}\cap \mathcal{F}^s(\widehat{q_j},S)$ for some pair $r, k  \in \mathbb{N}$ and we have a contradiction. Thus $\{\widehat{q_1},\widehat{q_2}, ...\} \cap \mathcal{F}^s(x,S)$ is a infinite set and
 we can organize  as a succession $\{q_n\}_{n\in \mathbb{N}}$, such that $q_i$ belongs to positive orbit of $q_{i-1}$,  and so, the hypotheses of the Lemma 11\footnote{Although this is presented in dimension three, it is valid in arbitrary dimension} in \cite{s} are satisfied, then there is $p \in Per(X)\cap \omega(q)$ such that $q_n \in \F^s(p)$ but as $q_n \in \mathcal{O}^+(q)$
therefore we can conclude that $\omega(q) = \gamma=\mathcal{O}(p)$.
\end{proof}

\begin{teor}\label{teor8}
Let $q \in M$, such that $\omega(q)$  is  a sectional-hyperbolic set of codimension   $1$. If $\omega(q)$ is a closed orbit, then  $q$ satisfies
the property  $(P)_\Sigma$   for some closed subset  $\Sigma$.
\end{teor}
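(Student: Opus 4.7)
The plan is to construct $I$ and $\Sigma$ directly from the hyperbolic saddle structure of the closed orbit $C := \omega(q)$. First we observe that $C$ is hyperbolic: when $C$ is a singularity, Definition~\ref{defin2} gives hyperbolicity, and the codimension one assumption forces $C$ to be Lorenz-like with $\dim W^u(C)=1$; when $C$ is a periodic orbit, it is a singularity-free invariant subset of a sectional-hyperbolic set, so the hyperbolic lemma applies and yields $\dim E^u=1$ (the complement of the flow direction in the two-dimensional central subbundle). In both cases $W^s(C)$ has codimension one in $M$, and $q\in W^s(C)$ since $X_t(q)\to C$.

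Next, I would fix a small open tubular neighborhood $U$ of $C$ on which invariant manifold theory provides a local stable manifold $W^s_{loc}(C)=W^s(C)\cap U$ and a local unstable manifold $W^u_{loc}(C)=W^u(C)\cap U$, with $W^s_{loc}(C)\cap W^u_{loc}(C)=C\subseteq int(U)$, and such that every $z\in U\setminus W^s_{loc}(C)$ exits $U$ in forward time. Choose $T_0\geq 0$ with $X_t(q)\in U$ for all $t\geq T_0$, and pick a short open arc $I\subseteq M$ with $q\in\partial I\setminus I$ whose tangent vector at $q$ is transverse to $W^s(C)$. Shrinking $I$ so that $I\cap W^s(C)=\emptyset$ and $X_{T_0}(r)\in U$ for all $r\in I$, we obtain for each $r\in I$ a positive time $t_1(r)>T_0$ at which $X_{t_1(r)}(r)\in\partial U$.

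Finally, I would build $\Sigma$ as the complement of a thin open neighborhood $V$ of $\mathcal{O}^+(q)\cup C$. Concretely, let $\mathcal{A}:=X_{[0,T_0]}(q)\subseteq W^s(C)$ and let $W_0$ be a thin open tubular neighborhood of $\mathcal{A}$; set $V:=W_0\cup U$ and $\Sigma:=M\setminus V$. By invariance of $W^s(C)$ we already have $\mathcal{O}^+(q)\subseteq V$, so $\mathcal{O}^+(q)\cap \Sigma=\emptyset$ and thus $Cl(\mathcal{O}^+(q)\cap\Sigma)=\emptyset$. For the exit points, since $W^s_{loc}(C)\cap W^u_{loc}(C)=C$, the compact sets $W^s(C)\cap\partial U$ and $W^u(C)\cap\partial U$ are disjoint in $\partial U$, hence separated by a positive distance; a standard inclination argument shows the exit points $X_{t_1(r)}(r)$ cluster on $W^u_{loc}(C)\cap\partial U$ as $r\to q$. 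After shrinking $I$ again, we can make all exit points lie in a small neighborhood of $W^u(C)\cap\partial U$, and after shrinking $W_0$, we can arrange $Cl(W_0)\cap\partial U$ to stay in a complementary neighborhood of $W^s(C)\cap\partial U$, disjoint from the exit points. Then $X_{t_1(r)}(r)\in \partial U\setminus W_0\subseteq \Sigma$ for every $r\in I$, establishing $(P)_\Sigma$.

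The hard part will be executing this separation argument uniformly for $r\in I$: one must use the transversality of $W^s(C)$ and $W^u(C)$ along $C$ together with the codimension one hypothesis (which keeps $W^u(C)\cap\partial U$ a low-dimensional compact subset of $\partial U$) to choose $W_0$ thin enough, while simultaneously controlling the shape of the tube along the entire arc $\mathcal{A}$ and not just at its endpoint $X_{T_0}(q)$.
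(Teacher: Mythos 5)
Your overall plan (take $\Sigma$ to be the complement of a thin neighborhood $V=W_0\cup U$ of $Cl(\mathcal{O}^+(q))$ and push every point of $I$ out through $\partial U$ near $W^u_{loc}(C)$) breaks down at exactly the points this theorem is designed for; note also that the paper itself gives no argument here but defers to Theorem 4 of \cite{s}. The first genuine gap is the step ``shrinking $I$ so that $I\cap W^s(C)=\emptyset$''. Transversality of $I$ to $W^s(C)$ at $q$ only lets you avoid the single sheet of $W^s(C)$ through $q$; the global stable manifold may accumulate on $q$. This is not a pathology: for the geometric Lorenz attractor, $C=\{\sigma\}$ is a Lorenz-like singularity, $q\in W^s(\sigma)$ satisfies the hypotheses, and $W^s(\sigma)$ meets every transverse arc with endpoint $q$ in a set accumulating at $q$, so no shrinking achieves $I\cap W^s(C)=\emptyset$. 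With your definition $W^s_{loc}(C)=W^s(C)\cap U$ those points $r\in I\cap W^s(C)$ have no exit time $t_1(r)$ at all, and a point of $I$ whose forward orbit lands on the true local stable manifold inside $U$ stays in $U\subseteq V$ forever, hence never meets $\Sigma=M\setminus V$; your construction gives no reason why such points must have met $\Sigma$ earlier. (Also, your preliminary claim that codimension one forces a singular $\omega(q)$ to be Lorenz-like with $\dim W^u=1$, hence $W^s(C)$ of codimension one, is unjustified: both central eigenvalues at $\sigma$ may be expanding, in which case $W^s(\sigma)=\F^{ss}(\sigma)$ has codimension two.)

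The second gap is the separation step. The sets that are disjoint are $W^s_{loc}(C)\cap\partial U$ and $W^u_{loc}(C)\cap\partial U$, not the intersections of the \emph{global} manifolds with $\partial U$: homoclinic intersections of $W^s(C)$ and $W^u(C)$ are fully compatible with the hypotheses, and $q$ itself may lie on $W^u(C)$ (a homoclinic orbit to a periodic orbit, or a homoclinic loop of a Lorenz-like singularity, has $\omega(q)=C$). In that case the arc $\mathcal{A}=X_{[0,T_0]}(q)$ crosses $\partial U$ at points of $W^u_{loc}(C)\cap\partial U$, so every tube $W_0$ around $\mathcal{A}$, however thin, contains a $\partial U$-neighborhood of such a crossing; since the exit points of orbits from $I$ cluster precisely on $W^u_{loc}(C)\cap\partial U$, you cannot conclude they land in $\partial U\setminus W_0\subseteq\Sigma$. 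So the complement-of-a-tube choice of $\Sigma$ is the wrong object. A repair has to look different: for instance, follow $I$ through a controlled finite number of returns (avoiding the finitely many sheets of $W^s(C)$ that are actually hit before the orbit of $q$ settles), so that all points of a suitably shrunk $I$, including those on other sheets of $W^s(C)$, make a synchronized passage near $C$ and cross one fixed small section $\Sigma$ transverse to the unstable direction, placed so close to $C$ and so far from the finitely many earlier crossings of $\mathcal{O}^+(q)$ that $\Sigma\cap Cl(\mathcal{O}^+(q))=\emptyset$. Without an argument of that kind, the two steps above are genuine gaps, not technicalities.
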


\begin{proof}
See Theorem 4 in \cite{s}
\end{proof}

As a direct consequence of the Theorems \ref{teor7} and \ref{teor8}, we obtain:

\begin{teor}\label{teor9}
Let $\omega(q)$ a sectional-hyperbolic set of codimension  $1$ of a vector field $X$ on $M$, then $q$ satisfies the property  $P_{(\Sigma)}$  if and only if $\omega(q)$ is a closed orbit.
\end{teor}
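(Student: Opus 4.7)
The plan is to obtain Theorem \ref{teor9} as an immediate corollary of Theorems \ref{teor7} and \ref{teor8}, which together cover the two implications. I would split the proof into the two directions of the biconditional, handling a small case distinction in the forward direction to cover the situation in which $\omega(q)$ happens to be a singularity.

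For the ($\Leftarrow$) direction, assume $\omega(q)$ is a closed orbit. Since by hypothesis $\omega(q)$ is a sectional-hyperbolic set of codimension $1$, Theorem \ref{teor8} applies verbatim and produces a closed subset $\Sigma$ for which $q$ satisfies property $P_{(\Sigma)}$. There is nothing more to do here.

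For the ($\Rightarrow$) direction, assume $q$ satisfies property $P_{(\Sigma)}$ for some closed $\Sigma$. Split into two cases according to whether $\omega(q)$ contains regular orbits. If $\omega(q)$ is a singularity, then by the definition of closed orbit it is already a closed orbit and there is nothing to prove. If $\omega(q)$ is not a singularity, then all the hypotheses of Theorem \ref{teor7} are in force (sectional-hyperbolicity, codimension $1$, the property $P_{(\Sigma)}$, and the non-singularity assumption), so Theorem \ref{teor7} concludes that $\omega(q)$ is a periodic orbit, hence a closed orbit.

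Since this argument is essentially a citation of the two prior theorems, I do not expect any technical obstacle. The only subtlety worth flagging explicitly is the singular case in the forward direction, which is not covered by Theorem \ref{teor7} but is trivially a closed orbit, so the statement is preserved. Combining both implications yields the biconditional of Theorem \ref{teor9}.
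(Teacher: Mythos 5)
Your proof is correct and follows exactly the route the paper takes: Theorem \ref{teor9} is stated there as a direct consequence of Theorems \ref{teor7} and \ref{teor8}, with the backward implication given by Theorem \ref{teor8} and the forward implication by Theorem \ref{teor7}. Your explicit handling of the case where $\omega(q)$ is a singularity (trivially a closed orbit) is a sensible clarification of what the paper leaves implicit.
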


\section{Proof the main theorem}

For the proof, we will first analyze two particular cases and then the general case.

\begin{teor}\label{teor10}
Let $\Lambda$   be a sectional-hyperbolic set of codimension $1$ from a field $X$ on $M$, such that $W^{u}(H) \subseteq \Lambda$ for every hyperbolic subset $H$ of $\Lambda$. If $p,\sigma \in \Lambda$ satisfy $p \prec \sigma$ where $p$ a periodic point and  $\sigma$ a singularity, then there exists $x\in \Lambda$ such that $\alpha(x)=\alpha(p)$ and $\omega(x)$ is a singularity.
\end{teor}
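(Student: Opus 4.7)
The orbit $\mathcal{O}(p)$ is regular, so by the hyperbolic lemma it is a hyperbolic subset of $\Lambda$, and the hypothesis $W^u(H)\subseteq\Lambda$ applied to $H=\mathcal{O}(p)$ gives $W^u(\mathcal{O}(p))\subseteq\Lambda$. Every $y\in W^u(\mathcal{O}(p))\setminus\mathcal{O}(p)$ thus already lies in $\Lambda$ and satisfies $\alpha(y)=\mathcal{O}(p)=\alpha(p)$; the proof reduces to producing one such $y$ whose $\omega$-limit is a singularity.

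From $p\prec\sigma$ I extract $z_n\to p$ and $t_n>0$ with $X_{t_n}(z_n)\to\sigma$; continuity of the flow together with $\sigma\notin\mathcal{O}(p)$ forces $t_n\to\infty$. Fix a small cross section $\Sigma_p$ transverse to $X$ at $p$ and, after flowing each $z_n$ by a bounded time, assume $z_n\in\Sigma_p$. Since $p$ is a hyperbolic fixed point of the Poincar\'e return on $\Sigma_p$, local product structure supplies, for $n$ large, a decomposition $z_n=[w_n,v_n]_{\Sigma_p}$ with $w_n\in W^{uu}(p)\cap\Sigma_p$ and $v_n\in W^{ss}(p)\cap\Sigma_p$, both converging to $p$. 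Strong-stable contraction yields $d(X_t(z_n),X_{t+\tau_n}(w_n))\to 0$ as $t\to\infty$, with $\tau_n$ a bounded time shift, so there is $s_n\to\infty$ with $X_{s_n}(w_n)\to\sigma$, while $w_n\in W^{uu}(p)\subseteq W^u(\mathcal{O}(p))\subseteq\Lambda$ by construction.

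Next I invoke the codimension-one hypothesis: $\dim\mathsf{F}^c=2$ forces $\dim W^u(\sigma)\le 1$, so $W^s(\sigma)$ has codimension one in $M$, and the accompanying eigenvalue analysis forces $\sigma$ to be Lorenz-like. Parametrize one of the two branches of $W^{uu}(p)\setminus\{p\}$ as $\gamma:(0,\varepsilon)\to\Lambda$ with $\gamma(0)=p$, and pass to a subsequence so that $w_n=\gamma(r_n)$, $r_n\to 0$. Choose a cross section $\Sigma_\sigma$ associated with $\sigma$, so that $\F^s(\sigma,\Sigma_\sigma)=W^s(\sigma)\cap\Sigma_\sigma$ is a codimension-one submanifold of $\Sigma_\sigma$. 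The map $\psi$ sending $r$ to the first entry point of $\mathcal{O}^+(\gamma(r))$ in $\Sigma_\sigma$ is continuous on its domain, and the entry points $\psi(r_n)$ accumulate on $\F^s(\sigma,\Sigma_\sigma)$ as $n\to\infty$. A continuity / intermediate-value argument along $\gamma$, controlled by a sectional partition (Theorem \ref{teor6}) fine enough that the relevant returns lie in a single cross section, then yields $r^{*}\in(0,\varepsilon)$ with $\psi(r^{*})\in\F^s(\sigma,\Sigma_\sigma)$; hence $\gamma(r^{*})\in W^s(\sigma)$ and $\omega(\gamma(r^{*}))=\{\sigma\}$. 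Setting $x=\gamma(r^{*})$ completes the proof.

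The main obstacle will be this last step. The return map $\psi$ is undefined exactly on $W^s(\sigma)$ and can be discontinuous there, and a priori the curve $\psi\circ\gamma$ could lie entirely on one side of $\F^s(\sigma,\Sigma_\sigma)$; in that case one must either modify the argument to use a different singularity reached along $W^+$ or $W^-\subseteq\Lambda$, or extract extra information from the accumulation $\psi(r_n)\to\F^s(\sigma,\Sigma_\sigma)$ along both subsequences. Combining the codimension-one structure, the inclusion $W^u(\mathcal{O}(p))\subseteq\Lambda$, and the sectional-partition machinery of Sections 2--3 is where the technical work really sits.
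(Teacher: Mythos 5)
Your first half follows the paper's route almost verbatim: you transfer the approximating sequence onto $W^{uu}(p)$ through the strong stable manifolds, conclude that $W^{uu}(p)\subseteq\Lambda$ accumulates $\sigma$, that $\sigma$ is Lorenz-like with $W^s(\sigma)$ of codimension one, and you set up a first-entry map from $W^{uu}(p)$ to a cross section associated with $\sigma$. But the final step --- producing $r^*$ with $\psi(r^*)\in\F^s(\sigma,\Sigma_\sigma)$ by an intermediate-value argument --- is exactly where the argument stops being a proof, and you say so yourself: the curve of entry points may well stay on one side of the leaf, and nothing in your construction rules this out. That is a genuine gap, not a technicality; the whole difficulty of the theorem, and the reason Sections 2--4 of the paper exist, is to handle that case.

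Two ideas are missing. First, instead of a single branch of $W^{uu}(p)$, the paper takes a fundamental domain $D^u=[a,b]$ of $W^{uu}(p)$ with $b$ on the positive orbit of $a$, so both endpoints return to the same point $c\in int(\Sigma_1)$; if all of $[a,b]$ lay in the domain of the return map $\Pi_D$, the image would be a closed curve in $\Sigma_1$ everywhere transverse to the codimension-one foliation $\F^s_{\Sigma_1}$ (its tangents lie in $\Df^c$), which is impossible, so there are genuine boundary points $a<q^*<q^{**}<b$ with $q^*,q^{**}\notin Dom(\Pi_D)$. Second, in your problematic case --- the image endpoints $c^*,c^{**}$ on the same side of $\F^s(y_1,\Sigma_1)$ --- the paper does not force a crossing at all: it shows $q^*$ satisfies property $P_{(\Sigma_0)}$ with $I=(a,q^*)$ for a suitable subsection $\Sigma_0\subseteq\Sigma_1$, so by the characterization of omega-limit sets (Theorems \ref{teor7} and \ref{teor9}) $\omega(q^*)$ is a closed orbit; the periodic alternative is killed by the Inclination Lemma, which would again yield a closed curve in $\Sigma_1$ transverse to $\F^s_{\Sigma_1}$; hence $\omega(q^*)$ is a singularity (not necessarily $\sigma$), and since $q^*\in W^{uu}(p)$ one takes $x=q^*$. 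Note that the statement only requires $\omega(x)$ to be \emph{some} singularity; your plan of landing on $W^s(\sigma)$ specifically is stronger than needed and is precisely what cannot be guaranteed. To repair your proposal you would need to import this $P_{(\Sigma)}$ machinery (or an equivalent argument) rather than the suggested case-by-case fix along $W^+$ and $W^-$.
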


\begin{proof}
Since $p \prec q$, then there are successions $(z_n)_{n \in \Dn}$ with $z_n \to p$, and $(t_n)_{n\in \Dn}$ with $t_n > 0$ such that $X_{t_n}(z_n) \to \sigma$; we can take $t_n \to \infty$    and without loss of generality we can assume that  $z_n \in U_\Lambda$ for all $n \in \Dn$.
 We denote by $O=\mathcal{O}(p)$  the periodic orbit containing  $p$, as $O \subseteq \Lambda$ is hyperbolic we have  $W^{uu}(p)$ is well defined and by hypothesis contained in  $\Lambda$, in addition $\F^{ss}(p)=W^{ss}(p)$. Now for the continuity of  $\F^{ss}(p)$ and given that $z_n \to p$, for $n$ sufficiently large $\F^{ss}(z_n)$ intersects $W^{uu}(p)$ at a point $z'_n$. Since $z_n$ and $z'_n$ have the same strong stable manifold, $X_{t_n}(z_n) \to \sigma$ and $t_n \to \infty$, we have $X_{t_n}(z'_n) \to \sigma$. But $z'_n \in W^u(O)$ which is invariant, then  $\sigma \in Cl(W^u(O))$, therefore, $\sigma$ is Lorenz-like.\\

We choose two cross-sections $\Sigma_1 \subseteq U_\Lambda$ and $\Sigma_2 \subseteq U_\Lambda$, associated with $\sigma$ such that the intersection of $\Sigma_1$ with   one of the connected components of $W^{ss}(\sigma)\setminus \F^{ss}(\sigma)$ is a point $y_1 \in int(\Sigma_1)$ and the intersection of $\Sigma_2$ with the other component is a point   $y_2 \in int(\Sigma_2)$. We take this sections of small size, so that $O \cap (\Sigma_1 \cup \Sigma_2) = \emptyset$. Since $\Lambda \cap \F^{ss}(\sigma)=\{\sigma\}$, then we can establish  $(\partial^h\Sigma_1 \cup \partial^h\Sigma_2)\cap \Lambda = \emptyset$. Let  $\F^s_{\Sigma_1}$ y $\F^s_{\Sigma_2}$ the vertical foliations of $\Sigma_1$ and $\Sigma_2$.\\

On the other hand as $W^u(O)$ accumulates on $\sigma$ then accumulates on $\F^s(y_1,\Sigma_1)$, $\F^s(y_2,\Sigma_2)$ or both. Assume without loss of generality that accumulates on  $\F^s(y_1,\Sigma_1)$. We can select a point $c \in W^{uu}(p) \cap int(\Sigma_1)$, as the first point where  $W^{uu}(p)$ that intersects $int(\Sigma_1)$.  Taking the negative orbit of  $c$  and using the fact that  $dim(W^{uu}(p))=1$, we obtain a fundamental domain $D^u = [a,b]$ of $W^{uu}(p)$ such that $D^u \cap \Sigma = \emptyset$. Furthermore, $b$  is in the positive orbit of  $a$  and in the negative orbit of  $c$.\\

We define the function $\Pi_D: Dom(\Pi_D) \subseteq D^u \to int(\Sigma_1)$
with
$$Dom(\Pi_D)=\{x \in D^u: X_t(x)\in int(\Sigma_1)
\text{ for some } t > 0\}$$ given by $\Pi_D(x) = X_{t(x)}(x)$, where $t(x)$  is the return time, that is, the first  $t>0$ for which
$X_t(x)\in int(\Sigma_1)$.\\

By construction you have $a, b \in Dom(\Pi_D)$, and since $b$   is in the positive orbit of  $a$  it follows that $\Pi_D(a) = \Pi_D(b) = c \in int(\Sigma_1)$. Define\\
\begin{align*} q^*=
 Sup \{s \in [a,b] : [a,s] \subseteq Dom(\Pi_D),  \Pi_D([a,s]) & \subseteq int(\Sigma) \\ & \text{ and } \Pi_D|_{[a,s]} \text{ is } C^1 \}
 \end{align*}
\begin{align*} q^{**}=
Inf \{s \in [a,b] : [s,b] \subseteq Dom(\Pi_D), \Pi_D([s,b]) & \subseteq int(\Sigma) \\ & \text{ and }  \Pi_D|_{[s,b]} \text{ is } C^1 \}
\end{align*}

Since $\Pi_D(a), \Pi_D(b) \in int(\Sigma)$,  by the continuous dependence of the flow, $q^*$ y $q^{**}$  are well defined and $a < q^*$ y $q^{**} < b$; now if $q^*=b$, $q^{**}=a$ o $q^*=q^{**}$, we would  have  $\Pi_D([a,b])$  is a curve closed  $l$ in $int(\Sigma_1)$  (without a point in the third case) and therefore tangent to $\F^s_{\Sigma_1}$   in at least one point, but as  $D^u \subseteq W^{uu}(p)$, then the vectors tangent to $l$  belong to the central subspace $\Df^c$, meaning that $l$  is transversal to the strong stable foliation in  $\Lambda$
 and therefore to the foliation  $\F^s_{\Sigma_1}$,  which contradicts that it is closed, then  $a<q^*<q^{**}<b$,  also again by the continuous dependency we have that  $q^*,q^{**} \notin Dom(\Pi_D)$, otherwise $q^*$  would not be a supremum or  $q^{**}$ an infimum.\\

Let $c^*,c^{**}\in int(\Sigma_1)$, the open extreme of the semi-open arcs $\Pi_D([a,q^*))$ and $\Pi_D((q^{**},b])$ respectively and $l =\Pi_D([a,q^*)\cup (q^{**},b])$, as $\Pi_D(a)=\Pi_D(b)=c$, we have  $l$, it is an open connected arc with ends  $c^*$ y $c^{**}$, in addition $([a,q^*)\cup (q^{**},b])\subseteq W^{uu}(p)$, then $l$ is transverse to $\F^s_{\Sigma_1}$. On the other hand, since  $\Lambda$ has codimension  $1$, $\F^s(y_1,\Sigma_1)$ divide a $\Sigma_1$ into two connected components. Two cases are presented:

\begin{enumerate}
\item[{\bf Case 1:}] {If $c^*$ and $c^{**}$ are in different sides of $\F^s(y_1,\Sigma_1)$}, then, the arc $l$ intersects the leaf $\F^s(y_1,\Sigma_1)$, that is, there exists $x \in W^{s}(\sigma)\cap W^{uu}(p)$, then $\alpha(x)=\alpha(p)$ and $\omega(x) = \{\sigma\}$.\\

\item[{\bf Case 2:}] {If  $c^*$ and $c^{**}$ are on the same side of $\F^s(y_1,\Sigma_1)$, or $c^* \in \F^s(y_1,\Sigma_1)$ or $c^{**} \in \F^s(y_1,\Sigma_1)$}. Assume without loss of generality that $c^*$ is closer to $\F^s(y_1,\Sigma_1)$, than $c^{**}$.  Consider the cross section $\Sigma_0 \subseteq \Sigma_1$, given by the leaves $\F^s(y_1,\Sigma_1)$, $\F^s(\Pi_D(r),\Sigma_1)$, and all the leaves $\F^s_{\Sigma_1}$ between them, at where $r \in (q^{**},b)$. We have that $\mathcal{O}^+(q^*)$
does not intersect the interior of $\Sigma_1$ and $\partial^h\Sigma_0 \cap \Lambda = \emptyset$, then $q^*$ satisfies the property $P_{(\Sigma_0)}$ by taking $I= (a,q^*)$.\\

Since $q^* \in \Lambda$ sectional-hyperbolic,  $\omega(q^*)$ is also sectional-hyperbolic, then by the theorem \ref{teor8}, $\omega(q^*)$
is a periodic orbit or a singularity. If  $\omega(q^*) = \mathcal{O}(\widehat{p})$ with $\widehat{p} \in \Lambda \cap Per(X)$, then follows from Inclination Lemma (see lemma 2.15 in \cite{ap}),  that the positive orbit    $I$ accumulates on $W^u(\mathcal{O}(\widehat{p}))$. In particular, the positive orbit of $I$ contains an open arc $I^*$ arbitrarily close to $\widehat{D^u}=[\widehat{a},\widehat{b}]$, where $\widehat{D^u}$ is a fundamental domain of  $W^{uu}(\widehat{p})$.\\

We define the function $\Pi_{\widehat{D}}: Dom(\Pi_{\widehat{D}}) \subseteq \widehat{D^u} \to int(\Sigma_1)$
with
$$Dom(\Pi_{\widehat{D}})=\{x \in \widehat{D^u}: X_t(x)\in int(\Sigma_1)
\text{ for some } t > 0\}$$ given by $\Pi_{\widehat{D}}(x) = X_{t(x)}(x)$, where $t(x)$  is the return time, that is, the first $t>0$ for which
$X_t(x)\in int(\Sigma_1)$.\\

By projecting $I^*$ onto $\widehat{D^u}$  along the strong stable manifolds of the points in  $I^*$,  we can conclude that  $\widehat{D^u} \subseteq Dom(\Pi_{\widehat{D}})$, then $\Pi_{\widehat{D}}(\widehat{D^u})$   is a closed curve $\widehat{l} \subseteq int(\Sigma_1)$, but since $\widehat{D^u}$
is a fundamental domain of  $W^{uu}(\widehat{p}) \subseteq \Lambda$, then $\widehat{l}$  is transversal to $\F^s_{\Sigma_1}$, which contradicts that  $\widehat{l}$    is closed. From the above contradiction we conclude that $\omega(q^*)$  is a singularity, therefore  $q^* \in W^s(\sigma^*) \cap W^{uu}(p)$   for some singularity $\sigma^* \in \Lambda$.  Then taking $x = q^*$, we get the result.
\end{enumerate}

\end{proof}

\begin{teor}\label{teor11}
Let $\Lambda$ be a sectional-hyperbolic set of codimension  $1$ from a field $X$ over $M$, such that $W^{u}(H) \subseteq \Lambda$ for every hyperbolic subset $H$ of $\Lambda$. If $p,\sigma \in \Lambda$ satisfies $p \prec \sigma$, $\alpha(p)$ does not contain singularities and $\sigma$   is a singularity, then there exists  $x\in M$ such that $\alpha(x)=\alpha(p)$ and $\omega(x)$ is a singularity.
\end{teor}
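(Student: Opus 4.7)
Plan: I will adapt the proof of Theorem \ref{teor10} to general $p\in\Lambda$ by replacing the hyperbolic periodic orbit $\mathcal{O}(p)$ used there with the alpha-limit set $\alpha(p)$. Since $\alpha(p)$ is compact, invariant and, by hypothesis, free of singularities, the hyperbolic lemma applied to $\alpha(p)\subseteq\Lambda$ gives that $\alpha(p)$ is a hyperbolic subset of $\Lambda$, and the hypothesis of the theorem then yields $W^u(\alpha(p))\subseteq\Lambda$. Since the hyperbolic splitting of $\alpha(p)$ extends continuously to a neighborhood and $X_{-t}(p)\to\alpha(p)$, the strong unstable manifold $W^{uu}(p)$ is well defined as a $C^1$ one-dimensional submanifold (because $\dim\Df^c=2$ forces $\dim E^u=1$ on $\alpha(p)$), and is contained in $W^u(\alpha(p))\subseteq\Lambda$. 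This observation substitutes, in the non-periodic case, for the inclusion $W^{uu}(p)\subseteq\Lambda$ used in Theorem \ref{teor10}.

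With $W^{uu}(p)$ in hand, the opening of the proof of Theorem \ref{teor10} transfers essentially verbatim. From $p\prec\sigma$ we have $z_n\to p$ and $t_n\to\infty$ with $X_{t_n}(z_n)\to\sigma$; continuity of $\F^{ss}$ in $U_\Lambda$ together with the local product structure produce points $z'_n\in\F^{ss}(z_n)\cap W^{uu}(p)$ satisfying $X_{t_n}(z'_n)\to\sigma$, and since $W^{uu}(p)\subseteq W^u(\alpha(p))\subseteq\Lambda$ is invariant this forces $\sigma\in\mathrm{Cl}(W^u(\alpha(p)))$, so $\sigma$ is Lorenz-like. I then fix the associated cross sections $\Sigma_1,\Sigma_2\subseteq U_\Lambda$, their distinguished leaf points $y_1,y_2$ and vertical foliations $\F^s_{\Sigma_1},\F^s_{\Sigma_2}$ exactly as in Theorem \ref{teor10}, assuming without loss of generality that the accumulation occurs on $\F^s(y_1,\Sigma_1)$.

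The main obstacle is that, $p$ not being periodic, the compact fundamental domain $D^u=[a,b]\subseteq W^{uu}(p)$ with $b=X_T(a)$ --- which in Theorem \ref{teor10} made $\Pi_D([a,b])$ a closed curve in $\mathrm{int}(\Sigma_1)$ tangent to $\Df^c$ --- no longer exists. To substitute for it, take $W^+$ to be the connected component of $W^{uu}(p)\setminus\{p\}$ containing infinitely many of the $z'_n$, and choose a compact arc $[a,b]\subseteq W^+$ whose endpoints are suitably separated members of this sequence, so that $\Pi(a),\Pi(b)$ are defined and the image $\Pi([a,b]\cap\mathrm{Dom}(\Pi))$ brackets the accumulation on $\F^s(y_1,\Sigma_1)$. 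On this arc the supremum/infimum construction of Theorem \ref{teor10} produces candidates $q^*,q^{**}\in[a,b]$; transversality of $\Df^c$ to $\F^s_{\Sigma_1}$ together with the closed-curve topological obstruction used in Theorem \ref{teor10} again prevents the degenerate collapses $q^*=b$, $q^{**}=a$ and $q^*=q^{**}$.

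The endgame copies Cases 1 and 2 of Theorem \ref{teor10}: either $W^+$ crosses $W^s(\sigma)$, producing directly a point $x\in W^{uu}(p)\cap W^s(\sigma)$ with $\omega(x)=\{\sigma\}$; or $q^*$ satisfies the property $(P)_{\Sigma_0}$ for a sub-cross-section $\Sigma_0\subseteq\Sigma_1$, so that Theorem \ref{teor9} together with the Inclination-Lemma argument of Theorem \ref{teor10} forces $\omega(q^*)$ to be a singularity. In either case the resulting $x$ lies in $W^{uu}(p)$, hence $\alpha(x)=\alpha(p)$. I expect the hardest step to be choosing the bracket $[a,b]\subseteq W^+$ so that the closed-curve obstruction of Theorem \ref{teor10} still rules out the degenerate cases: unlike the periodic setting one does not have $\Pi(a)=\Pi(b)$ automatically, so the topological argument must be recovered from the more delicate fact that $\Pi([a,b]\cap\mathrm{Dom}(\Pi))$ is a curve transverse to $\F^s_{\Sigma_1}$ whose two ``ends'' both approach $\F^s(y_1,\Sigma_1)$.
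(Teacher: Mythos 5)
Your proposal takes a genuinely different route from the paper, and it has a real gap at exactly the point you flag as ``the hardest step.''

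The paper does not try to rerun the Theorem~\ref{teor10} argument directly on $W^{uu}(p)$ for non-periodic $p$. Instead it reduces to the periodic case: it fixes $y\in\alpha(p)$, applies the Shadowing Lemma to the negative orbit of $p$ to manufacture a sequence of hyperbolic periodic points $p_n\to y$ with $p_n\in\Lambda$, and then shows that one such $p_{n_1}$ inherits the relation $p_{n_1}\prec\sigma$ via a local product structure (the paper's Property~$(Q)$, linking $I_{n_0}\subseteq W^{uu}(X_{-t_{n_0}}(p))$ and $W^{uu}(p_{n_1})$ through strong stable leaves). Theorem~\ref{teor10} is then invoked for the genuinely periodic $p_{n_1}$, producing $x^*$ with $\omega(x^*)$ a singularity, and Property~$(Q)$ transfers this back to a point $x\in I_{n_0}\subseteq W^{uu}(X_{-t_{n_0}}(p))$ with $\alpha(x)=\alpha(p)$. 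The periodicity of $p_{n_1}$ is what makes the fundamental domain $[a,b]$ with $X_T(a)=b$, hence $\Pi_D(a)=\Pi_D(b)$, hence a genuinely closed image curve --- the whole engine of the Theorem~\ref{teor10} contradiction.

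The gap in your adaptation is precisely that this closure is not recoverable from ``the two ends both approach $\F^s(y_1,\Sigma_1)$.'' The topological obstruction in Theorem~\ref{teor10} is that a loop in $\mathrm{int}(\Sigma_1)$ transverse to a codimension-one foliation cannot close up: following the transverse curve you cross leaves monotonically, so you can never return to the starting leaf. This argument needs the curve to be a loop, i.e.\ it needs $\Pi_D(a)=\Pi_D(b)$. With a non-periodic $p$ and $a,b$ chosen among the $z'_n$, you have no control forcing $\Pi(a)$ and $\Pi(b)$ onto the same leaf of $\F^s_{\Sigma_1}$, nor onto opposite sides of $\F^s(y_1,\Sigma_1)$: a priori they can both land on the same side, and then a transverse arc between them is perfectly consistent with the foliation and gives no contradiction and no crossing of $W^s(\sigma)$. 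Your Case~1 therefore does not trigger, and your Case~2 collapses because the set $l=\Pi_D([a,q^*)\cup(q^{**},b])$ is only a connected arc when $\Pi_D(a)=\Pi_D(b)$ --- without that identification it is two disjoint arcs, and the Property-$(P)_{\Sigma_0}$ setup and the supremum/infimum dichotomy lose their teeth. (The secondary closed-curve argument against $\omega(q^*)$ being periodic does survive, since it uses periodicity of $\widehat p$, not of $p$; but you cannot reach it.) The paper's use of shadowing is exactly the device that restores the missing closure, and your proposal needs either that device or a genuinely new topological argument in its place.

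A minor point worth making explicit in your write-up: the reason $W^{uu}(p)\subseteq\Lambda$ is that $p\in W^u(\alpha(p))$ (its negative orbit converges to $\alpha(p)$), $\alpha(p)$ is hyperbolic by the hyperbolic lemma, and the hypothesis gives $W^u(\alpha(p))\subseteq\Lambda$; and the strong unstable leaf through $p$ is the flow-pushforward of a local leaf based at a far-backward iterate $X_{-t}(p)$ near $\alpha(p)$, which sits inside $W^u(\alpha(p))$. You state this, but it deserves a line or two of justification rather than appearing as an obvious analogue of the periodic case.
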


\begin{proof}
We have that $\alpha(p)$ hyperbolic since it has no singularities. We fix $y \in \alpha(p)$, then there is a sequence  $(t_n)_{n\in \Dn}$ with $t_n \to \infty$ such that $X_{-t_n} (p) \to y$. We extend the hyperbolic decomposition of  $\alpha(p)$ to a neighborhood $U_{\alpha(p)}$,  as the negative orbit of $p$ becomes close to  $\alpha(p)$,  we can assume that $X_{-t_n} (p) \in U_{\alpha(p)}$, we can use graphics transformation techniques  \cite{hps,hps1},  to find a $\epsilon > 0$ and a succession of open intervals $(I_n)_{n\in \Dn}$ where $I_n = (X_{-t_n }(p) - \epsilon, X_{-t_n} (p) + \epsilon) \subseteq W^{uu}(X_{-t_n} (p))$, converging to the open interval $I = (y - \epsilon, y + \epsilon) \subseteq W^{uu}(y)$. \\

On the other hand, by applying the Shadowing Lemma \cite{kh} to the negative orbit of  $p$, we can establish a succession of periodic hyperbolic points  $\{p_n\}_{n\in \Dn}$ so that  $p_n \to y$ and $p_n \in U_{\alpha(p)}$. For $n$   large enough, by the continuity  $W^{ss}_{U_{\alpha(p)}}$,  we have $W^{ss}(p_n)$ intersect $W^{uu}(y)$ at a point $q_n$, given that $p_n$ and $q_n$ have the same strong stable manifold $\omega(q_n) = \omega(p_n)= \mathcal{O}(p_n)$ and as $q_n \in W^{uu}(y) \subseteq \Lambda$, then $p_n \in \Lambda$. \\

In addition, strong unstable manifolds $W^{uu}(p_n)$   have uniformly large size and approach to $I$ when $n \to \infty$.  Then both $W^{uu}(p_n)$ and $I_n$ approximate the interval  $I$ when $n \to \infty$; this allows us to fix $n_0,n_1 \in \Dn$ such that $p_{n_1} \in \Lambda$   and satisfies the property:\\

\textbf{Property $(Q)$}  The strong stable manifold of each point close to   $X_{-t_{n_0}}(p)$  intersects  $W^{uu}(p_{n_1})$,  and conversely, the strong stable manifold of any point close to $p_{n_1}$ intersects $I_{n_0}$\\

Now, since $p \prec \sigma$, so we also have to $X_{-t_{n_0}} (p) \prec \sigma$,  from where, there are successions $(z_m)_{m\in \Dn}$ con $z_m \to X_{-t_{n_0}} (p)$ and $(t_m)_{m \in \Dn}$ with $t_m > 0$ such that $X_{t_m} (z_m) \to \sigma$. Then the property  $(Q)$ implies that there is another sequence  $(z_m')_{m \in \Dn} \subseteq  W^{uu}(p_{n_1} ) \cap W^{ss}(z_m)$; then $X_{t_m}(z_m') \to \sigma$, therefore $p_{n_1} \prec \sigma$.
Applying the theorem \ref{teor10}, we have that exists $x^* \in \Lambda$ such that $\alpha(x^*) = \alpha(p_{n_1})$ y $\omega(x^*)$ is a singularity  $\sigma^*$.\\

Taking the negative orbit of $x^*$, we can assume that $x^*$  is close enough to $p_{n_1}$.  Then the property  $(Q)$ implies that $W^{ss}(x^*)$ intersects $I_{n_0}$  at some point $x$. Then as $I_{n_0} \subseteq W^{uu}(X_{-t_{n_0}}(p))$ and $\alpha(X_{-t_{n_0}} (p)) = \alpha(p)$,  then you have $\alpha(x) = \alpha(p)$, we have $x \in W^{ss}(x^*)$ then$\omega(x) = \omega(x^*) = \sigma^*$, which proves the result.
\end{proof}

\begin{proof}[Proof main theorem]
The result is immediate if $q \in \mathcal{O}^+(p)$,  then assume that $q \in \mathcal{O}^+(p)$. If $\omega(p)$ or $\omega(q)$ contain a singularity  $\sigma$, then $p \prec \sigma$, similarly if $\alpha(q)$ contains a singularity $\sigma$, then the continuity of the flow $X_t$, and the fact that $q \notin \mathcal{O}^+(p)$  implies that $p \prec \sigma$; then the result follows from the previous theorem.\\
We will assume that the set  $\alpha(p) \cup \omega(p) \cup \alpha(q) \cup \omega(q)$ has no singularities, then there is $\delta_1 > 0$ such that $p$ and $q$ are in $H_1$ defined by:

$$H_1 = \bigcap_{t \in \Dr} X_t(\Lambda \setminus B_{\delta_1} (Sing(X))).$$

Since $H_1$  has no singularities, then it is a hyperbolic set, for which $W^{uu}(p)$ is well defined,  and by hypothesis contained in  $\Lambda$, reasoning analogously to the beginning of the proof of theorem \ref{teor10}, there exists a sequence $\{z'_n\}_{n \in \Dn}$, such that $z_n' \in W^{uu}(p)$ and $X_{t_n}(z'_n) \to q$ with $z'_n \to p$ and $t_n \to  \infty$.\\

Suppose for a moment that for all $k \in \Dn$ there  is $\sigma_k \in Sing(X)$ such that
$$\sigma_k \in Cl\left( \bigcup_{n=k}^\infty \mathcal{O}^+(z'_n)\right)$$

Since the number of singularities in  $\Lambda$ is finite, we can assume that  $\sigma = \sigma_k$ does not depend on $k$. As $z_n' \to p$,  then it is concluded that  $p \prec \sigma$; then the result follows from the theorem  \ref{teor11}. Then we can assume that there is  $k_0 \in \Dn$ y $0 < \delta_2 < \delta_1$ such that
$$ \left( \bigcup_{n=k_0}^\infty \mathcal{O}^+(z'_n) \right) \cap B_{\delta_2}(Sing(X))=\emptyset$$
Observe that $\mathcal{O}(z_n') \subseteq \Lambda$,
for which,
$$\mathcal{O}^+(z_{n_0}) \subseteq \Lambda \setminus B_{\delta_2}(Sing(X))$$
for all $n \geq k_0$.
On the other hand, $z'_n \in  W^{uu}(p)$, then $\alpha(z'_n) = \alpha(p)$ that has no singularities. Then, like $z'_{n_0} \to p$,   we conclude that there exists  $\delta_3 < \delta_2$ such that $\mathcal{O}^-(z'_n) \subseteq U \setminus B_{\delta_3}(Sing(X))$, for all $n \geq k_0$. Consequently $(z_n')_{n \geq k_0} \subseteq H$ where:
$$H=\bigcap_{t \in \Dr}X_t(U \setminus B_{\delta_3}(Sing(X)))$$
 which has no singularities and therefore is hyperbolic. Then as $X_{t_n} (z_n') \to q$ y $H$   is hyperbolic, by the theorem  \ref{teor2}, there is $x \in M$ such that $\alpha(x) = \alpha(p)$ and $\omega(x) = \omega(q)$.
\end{proof}


\begin{thebibliography}{HH}

\bibitem{ap}
Araujo, V., Pacifico, M. J. {\em Three-dimensional flows}, Springer-Verlag, Berlin, (2010).

\bibitem{am}
Arbieto, A., Morales, C.A.,
A dichotomy for higher-dimensional flows,
{\em Proc. Amer. Math. Soc.} 141 (2013), no. 8, 2817--2827.

\bibitem{aml}
Arbieto, A., Morales, C. A., Lopez, A. M.,
Homoclinic classes for sectional-hyperbolic sets,
{\em Kyoto Journal of Mathematics.} Vol. 56 (2016) no. 3, 531--538


\bibitem{bm}
Bautista, S., Morales, C.A.,
{\em Lectures on sectional-Anosov flows}. Preprint IMPA Serie D 84 (2011).

\bibitem{bm1}
Bautista, S.,  Morales, C.A.,
A sectional-Anosov connecting lemma,
{\em Ergodic Theory Dynam. Systems} 30 (2010), no. 2, 339--359.

\bibitem{bm2}
Bautista, S., Morales, C.A.,
Characterizing omega-limit sets which are closed orbits,
{\em J. Differential Equations} 245 (2008), no. 3, 637--652.


\bibitem{hps1}
Hirsch, M., W., Pugh, C., C., Shub, M.,. Neighborhoods of hyperbolic sets. {\em Invent. Math}. Vol. 9 (1969/1970), 121-134.


\bibitem{hps}
Hirsch, M., W., Pugh, C., C., Shub, M.,
Invariant manifolds,
{\em Lecture Notes in Mathematics}, Vol. 583. Springer-Verlag, Berlin-New York, (1977).



\bibitem{kh}
Katok, A., Hasselblatt, B.,
{\em Introduction to the modern theory of dynamical systems}.
With a supplementary chapter by Katok and Leonardo Mendoza.
Encyclopedia of Mathematics and its Applications, 54. Cambridge
University Press, Cambridge, (1995).

\bibitem{mp2}
Morales, C.A., Pacifico, M.J.,
A dichotomy for three-dimensional vector fields,
{\em Ergodic Theory Dynam. Systems} 23 (2003), no. 5, 1575--1600.


\bibitem{s}
Sanchez, Y., Caracterización de conjuntos omega-límites singulares hiperbólicos que son órbitas cerradas. Master's degree work. {\em Universidad Nacional de Colombia.} Bogotá (2012). http://www.bdigital.unal.edu.co/6951/1/830370.2012.pdf

\end{thebibliography}
\end{document}